\newtheorem{remark}[theorem]{Remark}
\newtheorem{algorithm}[theorem]{Algorithm}
\newcommand{\integers}{\mathbb{Z}}
\newcommand{\mod}{\; (\mbox{mod}\;}
\newcommand{\D}{\displaystyle}
\begin{document}

%  Leave these commented lines here
% \input{elaheader-volx-xx.tex}
% \setcounter{page}{1}

% \renewcommand{\thefootnote}{\fnsymbol{footnote}}
% \renewcommand{\thefootnote}{\arabic{footnote}}
% \renewcommand{\theequation}{\thesection.\arabic{equation}}

\bibliographystyle{plain}
\title{
Embedding cocyclic D-optimal designs in cocyclic Hadamard matrices%\thanks{Received by the editors on Month x, 200x.Accepted for publication on Month y, 200y   Handling Editor: .}
}
% Leave blank; editors will write the exact dates above

\author{
V\'ictor \'Alvarez\thanks{Department of Applied Math I, University of Seville, Avda. Reina Mercedes s/n, 41012 Seville, Spain
(\{valvarez,armario,mdfrau,gudiel\}@us.es). }
% Remember to put \and between any two authors
\and
Jos\'e Andr\'es Armario\footnotemark[1]
\and
Mar\'ia Dolores Frau\footnotemark[1]
\and
F\'elix Gudiel\footnotemark[1]}
% Note that \footnotemark[3]} is used for the third author
% because of the same affiliation for the second and third authors.
% If the same affiliation is to be used for the first and second authors,
% \footnotemark[2] should be used instead of \thanks{} for the second author.

% Authors and running title to go on top of each page
\pagestyle{myheadings}
\markboth{V.\ \'Alvarez, J.A.\ Armario, M.D.\ Frau, and F.\ Gudiel}{Embedding cocyclic D-optimal designs in cocyclic Hadamard matrices}
\maketitle

\begin{abstract}
%A $D$-{\em optimal design of order $n$} is an $n\times n$ $(-1,1)$-matrix ha\-ving maximum determinant.
In this paper a method for embedding cocyclic submatrices with ``large'' determinants of orders $2t$    in certain cocyclic Hadamard matrices of orders $4t$ is described ($t$ an odd integer). If these determinants attain the largest possible value, we are embedding D-optimal designs. Applications to the pivot values that appear when Gaussian Elimination with complete pivoting is performed on these cocyclic Hadamard matrices are studied.
\end{abstract}

\begin{keywords}
D-optimal Designs, Cocyclic Hadamard matrices, Embedded matrices, Gaussian elimination pivots.
\end{keywords}
\begin{AMS}
 05B20, 15A15, 65F40, 65F05.
\end{AMS}

%%%%%%%%%%%%%%%%%%%%%%%%%%%%%%%%%%%%%%%%%%%%%%%%%%%%%%%%%%%%%
\section{Introduction} \label{intro-sec}
A {\it Hadamard matrix} $H$ of order $n$ is an $n\times n$ matrix with elements $\pm 1$ and $H H^T=nI$. A Hadamard matrix is said to be normalized if it has its first row and column all $1's$. We can always normalize a Hadamard matrix by multiplying rows and columns by $-1$. These matrices must have order 1, 2 or a multiple of 4. It is conjectured that Hadamard matrices exist for every $n \equiv 0 \mod 4)$. Although no proof of this fact is known, there is much evidence about its validity (see \cite{Hor07} and the references there cited).

Two Hadamard matrices $H_1$ and $H_2$ are called {\em equivalent} (or Hadamard equivalent, or $H$-equivalent) if one can be obtained from the other by a sequence of row and/or column interchanges and row and/or column negations. The  question of classifying  Hadamard matrices of order $n\geq 32$ remains unanswered and only partial results are known \cite{Kou}.

Problems involving Hadamard matrices sound very easy, but they are notoriously difficult to solve. One interesting open problem, among others, is the question of the largest pivot encountered during the process of Gaussian Elimination (GE) with complete pivoting  for an $n\times n$ Hadamard matrix $H$ (the so called ``growth factor'' for $H$). Traditionally, backward error analysis for Gaussian Elimination (GE), see e.g. \cite{DP88}, on a matrix  $A=\left[a_{ij}^{(1)}\right]$ is expressed in terms of the {\em growth factor}

$$g(n,A)=\frac{\mbox{max}_{i,j,k}\;|a_{ij}^{(k)}|}{\mbox{max}_{i,j}\;|a_{ij}^{(1)}|}$$
which involves all the elements $a_{ij}^{(k)},\,k=1,2,\ldots,n$, that occur during the elimination for a choice of pivoting strategy given. Matrices with the property where no row and column exchanges are needed during GE with complete pivoting are called {\em completely pivoted} (CP) or feasible. In other words, at each step of the elimination the element of largest magnitude (the ``pivot'', denoted by $p_k$) is located at the top left position of every appearing submatrix during the process. If  $A(k)$ denotes the absolute value of  $k\times k$ principal minor of $A$, then mathematically $A$ being CP means (or is equivalent to) that for each $k$, we have that $A(k)$ is greater than or equal to the absolute value of any other $k\times k$ determinant that includes the first $k-1$ rows and columns. This is not necessarily the maximum $k\times k$ minor of $A$, but only the maximum $k\times k$ minor of $A$ when its first $k-1$ rows and columns are fixed.

For a CP matrix $A$ we have
$$g(n,A)=\frac{\mbox{max}\{p_1,p_2,\ldots,p_n\}}{|a_{11}^{(1)}|}.$$
If a matrix is not initially CP, by applying row and column operations with complete pivoting we can always bring it in CP form.

The following lemma gives a useful relation between pivots and minors.

\begin{lemma}\cite{Cry68} Let $A$ be a CP matrix. The magnitude of the pivots which appear after application of GE operations to $A$ is given by
$$p_j=\frac{A(j)}{A(j-1)},\quad j=1,2,\ldots, n,\,\,A(0)=1.$$
\end{lemma}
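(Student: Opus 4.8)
The plan is to exploit the defining feature of a CP matrix: Gaussian elimination with complete pivoting needs no row or column interchanges, and therefore coincides with ordinary (unpivoted) elimination acting on the leading diagonal positions. First I would record that this unpivoted elimination is nothing but the computation of a factorization $A = LU$, where $L$ is unit lower triangular and $U$ is upper triangular. The existence of this factorization is guaranteed precisely because feasibility forbids a zero appearing in a pivot position (a zero pivot would force an interchange, contradicting that $A$ is CP), so every leading principal submatrix is nonsingular. By the very definition of the elimination, the pivot produced at the $j$-th step is the $(j,j)$ entry $u_{jj}$ of $U$, so that $p_j = |u_{jj}|$.

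Next I would connect the diagonal of $U$ to the principal minors. The key observation is that every elementary operation used in GE, namely adding a scalar multiple of one row to a row strictly below it, leaves each leading principal minor unchanged. Carrying the elimination to completion reduces $A$ to the upper triangular $U$, whose $j\times j$ leading minor is $u_{11}\cdots u_{jj}$; since none of the operations altered that minor, the same product equals the $j\times j$ leading principal minor of the original $A$. Equivalently, writing $A_j$, $L_j$, $U_j$ for the leading $j\times j$ blocks, the triangular shapes force $A_j = L_j U_j$, whence
$$
\det A_j = \det L_j \cdot \det U_j = \prod_{i=1}^{j} u_{ii}.
$$
Taking absolute values and recalling that $A(j)$ denotes the magnitude of this minor gives $A(j) = \prod_{i=1}^{j} p_i$.

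Finally, forming the quotient of two consecutive such products telescopes to the single surviving factor, yielding $p_j = A(j)/A(j-1)$ for every $j \geq 1$, where the convention $A(0) = 1$ supplies the empty product needed at $j = 1$.

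The step I expect to be the main obstacle is the careful justification that the GE row operations never disturb the leading principal minors (equivalently, the block identity $A_j = L_j U_j$). This requires a short case analysis on the operation that adds a multiple of row $i$ to row $k$ with $i < k$: if $k > j$ the first $j$ rows are untouched, so the leading $j\times j$ block is literally unchanged; if $k \leq j$ then both indices lie inside the block and the operation is a determinant-preserving row operation on $A_j$ itself. In either case the $j\times j$ leading minor is invariant. Checking that signs are correctly absorbed in the passage to absolute values is the only remaining bookkeeping.
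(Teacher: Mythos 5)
Your proof is correct: it is the standard LU-factorization argument (pivots equal the diagonal of $U$, leading principal minors are invariant under the elimination's row operations, hence $A(j)=\prod_{i\le j}p_i$ and the quotient telescopes), which is essentially Cryer's own argument; the paper itself states this lemma as a citation to \cite{Cry68} and gives no proof to compare against. One small imprecision: under complete pivoting a zero pivot does not ``force an interchange'' (it means the whole remaining submatrix is zero, i.e.\ $A$ is singular), so the nonsingularity of the leading blocks should be attributed to the implicit assumption that $A$ is nonsingular --- which the formula already requires, since $A(j-1)$ appears in a denominator --- rather than to the CP property itself.
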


 In 1969, Cryer  \cite{Cry68} conjectured that if $A$ is a real $n\times n$ matrix such that $|a_{i,j}|\leq 1,$ then $g(n,A)\leq n$, with equality iff $A$ is a Hadamard matrix. In 1991 Gould \cite{Gou91} proved that the first part of the conjecture is not true. He found matrices with growth bigger than their orders. Thus, the following remains open:

\noindent{\bf Conjecture}(Cryer) The growth of a Hadamard matrix is its order.

This conjecture has only been proven for $n=4, 8, 12$ and $16$ (see \cite{Cry68,EM95,KM09}). Great difficulty arises in the study of this problem because $H$-equivalence operations do not preserve pivots, i.e. the $H$-equivalent matrices do not necessarily have the same pivot pattern. For instance, for $n=16$ there are 34 pivot patterns although there are only 5 equivalence classes of Hadamard matrices for this order. Furthermore, many pivot patterns can be observed by permuting the rows and columns of  any 20 by 20 Hadamard matrix and there are just 3 inequivalent matrices.

However, the existence of D-optimal designs (and other specific submatrices with concrete determinants) that exist embedded in a Hadamard matrix have  provided some clues on the pivot patterns  (see \cite{SM10,Mit11}).

A {\it D-optimal design of order $n$} is an $n\times n\,\,(1,-1)$-matrix having maximal determinant. Here and throughout this paper, for convenience, whenever  a determinant or minor is mentioned, we mean its absolute value.
The question of finding the  determinant of a D-optimal design of order $n$  is an  old one which remains unanswered in general.

In 1893 Hadamard proved in \cite{Had93} that for every $(-1,1)$-matrix $M$, \begin{equation} \label{det0} det(M) \leq n ^\frac{n}{2}. \end{equation}

We recall that the original interest in Hadamard matrices stemmed from the fact that these matrices are the only ones that satisfy equality in (\ref{det0}).

This has led to further study and
tighter  bounds for the maximal  determinant for all $(-1,1)$-matrices of order $n \neq 0 \mod 4)$ have been found (see \cite{Bar33,Ehl64,Ehl64b,Woj64,KMS00}). For instance, when $n \equiv 2 \mod 4)$, Ehlich  in \cite{Ehl64} and independently Wojtas in \cite{Woj64} proved that \begin{equation} \label{det2} det(M) \leq (2n-2) (n-2) ^\frac{n-2}{2}. \end{equation} In order for equality to hold, it is required that there exists a $(-1,1)$-matrix $M$ of order $n$ such that $MM^T=\left[ \begin{array}{cc} L&0\\ 0&L \end{array}\right]$, where $L=(n-2)I_{\frac{n}{2}}+2J_{\frac{n}{2}}$. Here, as usual, $I_n$ denotes the identity matrix of order $n$, and $J_n$ denotes the $n\times n$ matrix all of whose  entries are equal to one. In these circumstances, it may be proven that, in addition, $2n-2$ is the sum of two squares,  a condition which is believed to be sufficient  (order 138 is the lowest for which the question has not been settled yet, \cite{FKS04}).

In the early 90s, a surprising link between homological algebra and Ha\-da\-mard matrices \cite{HD94} led to the study of cocyclic Hadamard matrices \cite{HD95}.
Hadamard matrices of many types are revealed to be (equivalent to) cocyclic matrices \cite{DH93,Hor07}. Among them, Sylvester Hadamard matrices, Williamson Hadamard matrices, Ito Hadamard matrices and Paley Hadamard matrices. Furthermore, the
     cocyclic construction is the most uniform construction technique for Hadamard matrices currently known, and
     cocyclic Hadamard matrices
    may consequently provide a uniform approach to the
famous Hadamard conjecture.

The main advantages of the cocyclic framework concerning $4t$ by $4t$ Hadamard matrices may be summarized in the following facts:

\begin{itemize}

\item The  test to decide whether a cocyclic matrix is Hadamard runs in $O(t^2)$ time, better than the $O(t^3)$ algorithm for usual (not necessarily cocyclic)  matrices.

\item The search space is reduced to the set of cocyclic matrices over a given group (that is, $2^s$ matrices, provided that a basis for
cocycles over $G$ consists of $s$ generators), instead of the whole set of $2^{16t^2} $ matrices of order $4t$ with entries in $\{-1,1\}$.  \end{itemize}

In \cite{AAFG11} was shown that  the cocyclic technique can  certainly be extended to handle the maximal determinant problem at least when $n\equiv 2 \mod 4)$. More concretely, the study  focused on cocyclic matrices over the dihedral group of $2t$ elements  with $t$ odd. Based on exhaustive and heuristic searches,  three algorithms for constructing cocyclic matrices with large determinants  were provided.

In this paper we are interested in embedding (cocyclic) submatrices of orders $2t$ with large determinants in certain cocyclic Hadamard matrices of orders $4t$. If these determinants attain the largest possible value, we are embedding D-optimal designs. Also, we discuss   the relation between the existence of these submatrices and the growth factor for these Hadamard matrices.

In Section 2,  an algebraic formalism (in terms of cocycles) to describe two combinatorial operations on a matrix (eliminate and add certain rows and columns) is provided. As a consequence of this formalism  a method arises for embedding (cocyclic) submatrices of orders $2t$ with large determinants in certain cocyclic Hadamard matrices of orders $4t$. In Section 3, we connect the existence of specific matrices embedded in cocyclic Hadamard matrices of order 20 with  the values of the pivots that  appear when we perform Gaussian elimination with complete pivoting on them. The last section is devoted to conclusions and future work.

\noindent{\bf Notation.} Throughout this paper we use $-$ for $-1$ and $1$ for $+1$. We write $H$ for a Hadamard matrix and $D_j$ for a D-optimal design of order $j$. The notation $D_j \in H$ means $D_j$ is embedded in $H$.

%%%%%%%%%%%%%%%%%%%%%%%%%%%%%%%%%%%%%%%
\section{Cocyclic D-optimal designs embedded in Cocyclic Hadamard matrices}
Assume throughout that $G=\{g_1=1,\,g_2,\ldots,g_{n}\}$ is a multiplicative group, not necessarily abelian. Functions
$\psi\colon G\times G\rightarrow \langle -1\rangle\cong {\bf Z}_2$ which satisfy
\begin{equation}\label{condiciondecociclo}
\psi(g_i,g_j)\psi(g_ig_j,g_k)=\psi(g_j,g_k)\psi(g_i,g_jg_k), \quad\forall g_i,g_j,g_k\in G
\end{equation}
are
called (binary) cocycles  (over $G$) \cite{McL95}. A cocycle is a  coboundary
$\partial\phi$ if it is derived from a set mapping $\phi\colon
G\rightarrow \langle -1\rangle$  by
$\partial\phi(a,b)=\phi(a)\phi(b)\phi(ab)^{-1}.$

A cocycle $\psi$ is naturally
displayed as {\it a cocyclic matrix (or $G$-matrix)} $M_\psi$; %The matrix obtained from $(\psi(g_i,g_j))$ by a sequence of row  and/or column permutations corresponds with the matrix display of the 2-cocycle $\psi$
%That is, under some fixed ordering of the elements of $G$ which indexes rows, and some (possibly different) fixed ordering of the elements of $G$ which indexes columns, the entry in the
 %$(g_i,g_j)$th position of the cocyclic matrix is $\psi(g_i,g_j)$, for all $g_i,g_j\in G$.
 %Throughout this paper we will consider under the fixed ordering of the elements of $G$ given above indexes rows and columns,
 that is, the entry in the $(i,j)$th position of the cocyclic matrix is $\psi(g_i,g_j)$, for all $1\leq i,j\leq n$.

%5We will only use normalized cocycles $\psi$ (and hence normalized cocyclic matrices $M_\psi$), so that $\psi(1,g_j)=\psi(g_i,1)=1$ for all $g_i,g_j \in G$ (and correspondingly all entries in the first row and column of $M_\psi=(\psi(g_i,g_j))$ are equal to 1).

A cocycle $\psi$ is {\it normalized} if $\psi(1,g_j)=\psi(g_i,1)=1$ for all $g_i,g_j \in G$. The cocyclic matrix coming from a normalized cocycle is called {\it normalized} as well.  Each unnormalized cocycle $\psi$ determines a normalized one $-\psi$, and vice versa. Therefore, we may reduce, without loss of generality, to the case of  normalized cocycles.

%Denote by ${\cal X}_n$ the set of all $\pm 1$ matrices of order $n$. For $A,B\in {\cal X}_n$, {\em the Hadamard product} of $A=[a_{ij}]$ and $B=[b_{ij}]$ is the matrix $A\circ B=[a_{ij}b_{ij}]$.

The set of cocycles forms an abelian group $Z(G)$ under pointwise multiplication, and the coboundaries form a subgroup $B(G)$. A basis $ \cal B$ for cocycles over $G$ consists of some elementary coboundaries $\partial_i$ and some representative cocycles, so that every cocyclic matrix admits a unique representation as a Hadamard (pointwise)  product $M=M_{\partial_{i_1}}\circ \ldots\circ M_{\partial_{i_w}}\circ R$, in terms of some coboundary matrices $M_{\partial_{i_j}}$ and a matrix $R$ formed from representative cocycles.

Recall that every {\em elementary coboundary} $\partial_d$ is constructed from the characteristic set map  $\delta_d\colon G\rightarrow  \{-1,1\}$ associated with an element $g_d\in G$, so that
%\begin{equation}\label{ecob}
$$\partial_d(g_i,g_j)=\delta_d(g_i)\delta_d(g_j)\delta_d(g_ig_j)\quad \mbox{for}\quad \delta_d(g_i)=\left\{\begin{array}{rr} -1 & g_d=g_i,\\
1 & g_d\neq g_i.
\end{array}\right.$$
%\end{equation}

\begin{remark} \label{notacob} (\cite[Lemma 1]{AAFR08})

In particular, for $d\neq 1$, every row $s \notin \{ 1,d\}$ in $M_{\partial _d}$ contains precisely two $-1$s, which are located at the positions $(s,d)$ and $(s, e)$, for $g_e = g^{-1}_s g_d$. Furthermore, the first row is always formed by $1$s, while the $d$-th row is formed all by $-1$s, excepting the positions
$(d, 1)$ and $(d, d)$.
\end{remark}

Although the elementary coboundaries generate the set of all coboundaries, they might not be linearly independent (see \cite{AAFR09} for details).

Let $G_r(M)$ (resp. $G_c(M)$) be the Gram matrix of the rows (resp. columns) of $M$,
$$G_r(M)=MM^T,\quad (\mbox{resp.} \,G_c(M)=M^TM).$$
The Gram matrices of a cocyclic matrix can be calculated as follows.
\begin{proposition}(\cite[lemma 6.6]{Hor07}) \label{prop1} \newline
Let $M_\psi$ be a cocyclic matrix,
\begin{equation}\label{aatc}[G_r(M_\psi)]_{ij}=\psi(g_ig_j^{-1},g_j)\sum_{g\in G}\psi(g_ig_j^{-1},g),
\end{equation}
\begin{equation}
\label{atac}
[G_c(M_\psi)]_{ij}=\psi(g_i,g_i^{-1}g_j)\sum_{g\in G}\psi(g,g_i^{-1}g_j).
\end{equation}
\end{proposition}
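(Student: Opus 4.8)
The plan is to prove both formulas \eqref{aatc} and \eqref{atac} by direct computation from the definitions, using the cocycle identity \eqref{condiciondecociclo} as the key algebraic tool. Since the two formulas are completely symmetric (one concerns $MM^T$ with rows, the other $M^TM$ with columns), I would establish the first in detail and then indicate that the second follows by an entirely analogous argument with the roles of left and right multiplication interchanged.

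First I would write out the $(i,j)$ entry of the row-Gram matrix directly from the definition of matrix multiplication: $[G_r(M_\psi)]_{ij} = \sum_{k} \psi(g_i,g_k)\psi(g_j,g_k)$, where the sum runs over all $g_k \in G$ and I have used that the entries are $\pm 1$, so $[M_\psi^T]_{kj} = \psi(g_j,g_k)$. The goal is to collapse this product $\psi(g_i,g_k)\psi(g_j,g_k)$ into a single cocycle value times a factor that can be pulled out of the sum. The natural move is to reindex the summation by setting $g = g_j^{-1}g_k$, equivalently $g_k = g_jg$; as $g_k$ ranges over $G$ so does $g$, so the sum is unchanged in range. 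After this substitution the summand becomes $\psi(g_i,g_jg)\psi(g_j,g_jg)$, which I still need to simplify.

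The heart of the argument is to apply the cocycle relation \eqref{condiciondecociclo} so as to replace $\psi(g_i, g_j g)$ by something involving $\psi(g_i g_j^{-1}, \cdot)$, matching the target expression in which $g_ig_j^{-1}$ appears. The cleanest route is to rename indices so that the two factors share a common structure: writing $a = g_ig_j^{-1}$, I would apply the cocycle identity with the triple $(a, g_j, g)$, namely $\psi(a,g_j)\psi(ag_j,g) = \psi(g_j,g)\psi(a,g_jg)$, and note $ag_j = g_i$. This lets me solve for $\psi(g_i, g_jg) = \psi(a,g_j)\psi(a,g_jg)\psi(g_j,g)$ (using that $\psi$ takes values in $\langle -1\rangle$, so every value is its own inverse). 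Substituting this back, the factor $\psi(a,g_j) = \psi(g_ig_j^{-1},g_j)$ is independent of the summation variable $g$ and pulls out front, exactly matching the prefactor in \eqref{aatc}. What remains inside the sum is $\psi(a, g_jg)\psi(g_j,g)\psi(g_j,g_jg)$, and here I expect a short further simplification — a second application of the cocycle identity, now to a triple built from $g_j$ — to reduce the two factors $\psi(g_j,g)$ and $\psi(g_j,g_jg)$ and leave precisely $\psi(a,g) = \psi(g_ig_j^{-1},g)$ under the sum, yielding $\psi(g_ig_j^{-1},g_j)\sum_g \psi(g_ig_j^{-1},g)$ as desired.

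The main obstacle is the bookkeeping in this second simplification step: one must verify that the residual product of cocycle values collapses correctly to the single term $\psi(g_ig_j^{-1},g)$ and does not leave a stray $g$-dependent factor, since such a factor would spoil the clean separation into a prefactor times a sum. This is exactly where the normalization and the $\integers_2$-valued nature of $\psi$ (so that $\psi(x,y)^2 = 1$ and signs cancel in pairs) must be used carefully, and where a misapplied instance of \eqref{condiciondecociclo} would produce an incorrect index. Once \eqref{aatc} is secured, formula \eqref{atac} follows by the symmetric computation, starting from $[G_c(M_\psi)]_{ij} = \sum_k \psi(g_k,g_i)\psi(g_k,g_j)$ and reindexing via $g = g_k g_i$ together with the analogous pair of cocycle relations, so I would present that case only in outline.
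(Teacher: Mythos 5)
The paper itself gives no proof of this proposition --- it is quoted directly from \cite[Lemma 6.6]{Hor07} --- so the comparison can only be with the standard direct computation, which is indeed the strategy you adopt: expand $[G_r(M_\psi)]_{ij}=\sum_k\psi(g_i,g_k)\psi(g_j,g_k)$, apply the cocycle identity, and reindex the sum. The strategy is right, but your central step is wrong. The instance of (\ref{condiciondecociclo}) you invoke, $\psi(a,g_j)\psi(ag_j,g)=\psi(g_j,g)\psi(a,g_jg)$ with $a=g_ig_j^{-1}$ and $ag_j=g_i$, contains the factor $\psi(g_i,g)$, \emph{not} $\psi(g_i,g_jg)$; it cannot be ``solved'' for $\psi(g_i,g_jg)$. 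In fact, your displayed equation $\psi(g_i,g_jg)=\psi(a,g_j)\psi(a,g_jg)\psi(g_j,g)$ is, by that very identity, equivalent to asserting $\psi(g_i,g_jg)=\psi(g_i,g)$, which is false in general: over $G=\integers_2=\{1,x\}$ with the (orthogonal) cocycle determined by $\psi(x,x)=-1$ and $\psi(1,\cdot)=\psi(\cdot,1)=1$, taking $g_i=g_j=x$ and $g=1$ gives $-1$ on the left and $+1$ on the right. The residual collapse you then ``expect'', namely $\psi(a,g_jg)\psi(g_j,g)\psi(g_j,g_jg)=\psi(a,g)$, fails termwise on the same example. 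So the decisive step of your proof is both unverified (you flag it only as expected) and, as stated, incorrect.

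The repair is small. Cleanest is not to reindex first: apply (\ref{condiciondecociclo}) to the triple $(g_ig_j^{-1},g_j,g_k)$ to get
\[
\psi(g_ig_j^{-1},g_j)\,\psi(g_i,g_k)=\psi(g_j,g_k)\,\psi(g_ig_j^{-1},g_jg_k),
\]
hence $\psi(g_i,g_k)\psi(g_j,g_k)=\psi(g_ig_j^{-1},g_j)\,\psi(g_ig_j^{-1},g_jg_k)$ because all values square to $1$; summing over $k$ and substituting $g=g_jg_k$ yields (\ref{aatc}) immediately. Alternatively, if you insist on reindexing first as you did, the correct triple is $(a,g_j,g_jg)$, which turns the summand into $\psi(a,g_j)\psi(a,g_j^{2}g)$, and $\sum_{g}\psi(a,g_j^{2}g)=\sum_{g}\psi(a,g)$. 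It is worth seeing why your computation nevertheless lands on the right formula: your two false identities differ from the correct ones precisely by the substitution $g\mapsto g_j^{2}g$ in the summation variable, which is a bijection of $G$ and therefore invisible after summing --- but your argument never observes this, so as written it is a chain of termwise claims two of which are false. Your treatment of (\ref{atac}) by the symmetric argument (triple $(g_k,g_i,g_i^{-1}g_j)$, reindexing $g=g_kg_i$) is fine once the row case is corrected.
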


If a cocyclic matrix $M_\psi$ is Hadamard, we say that the cocycle involved, $\psi$, is orthogonal and $M_\psi$ is {\it a cocyclic Hadamard matrix}. The cocyclic Hadamard test asserts that a normalized cocyclic matrix is Hadamard if and only if every  row sum (apart from the first) is zero \cite{HD95}. In fact, this is a straightforward consequence of Proposition \ref{prop1}.

Analyzing this relation from a new perspective, one could think of normalized cocyclic matrices meeting the bound (\ref{det0}) as normalized cocyclic matrices for which every row sum is zero. Could it be possible that such a relation translates somehow to the case $n\equiv 2 \mod 4)$? We now prove that, in fact, the answer to this question is affirmative.

A natural way to measure if the rows of a normalized cocyclic matrix $M=[m_{ij}]$ are close to sum zero, is to define an {\it absolute row excess} function $RE$, such that
$$RE(M)=\sum_{i=2}^n \left| \sum_{j=1}^n m_{ij} \right|.$$
This is a natural extension of the usual notion of {\em excess} of a Hadamard matrix, $E(H)$, which consists in the summation of the entries of $H$.

With this definition at hand, it is evident that a cocyclic matrix $M$ is Hadamard if and only if $RE(M)=0$. That is, a cocyclic matrix $M$ meets (\ref{det0}) if and only if $RE(M)$ is minimum. This condition may be generalized to the case $n \equiv 2 \mod 4)$.

For the remainder of the paper $t$ denotes an odd positive integer.

\begin{proposition} \cite{AAFG11}\label{prophadi}
Let $M$ be a normalized cocyclic matrix over $G$ of order $n=2t$. Then
\newline $RE(M)\geq 2t-2$.
\end{proposition}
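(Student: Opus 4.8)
The plan is to convert the statement about row sums into a statement about the largest eigenvalue of the Gram matrix $G_r(M)=MM^T$, and then to invoke the Ehlich--Wojtas phenomenon for order $n\equiv 2 \mod 4)$. First I would record what the cocyclic structure does to $G_r(M)$. Writing $R(h)=\sum_{g\in G}\psi(h,g)$ for the sum of the row of $M$ indexed by $h$, Proposition \ref{prop1} (equation (\ref{aatc})) gives $[G_r(M)]_{ij}=\psi(g_ig_j^{-1},g_j)\,R(g_ig_j^{-1})$, so that $|[G_r(M)]_{ij}|=|R(g_ig_j^{-1})|$. For a fixed row index $i$, as $j$ runs through $1,\dots,n$ the element $g_ig_j^{-1}$ runs bijectively through $G$; hence every row of $G_r(M)$ has the same sum of absolute values, namely $\sum_{h\in G}|R(h)|$. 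Since $M$ is normalized, $R(1)=n$, and the remaining $R(g_i)$ are exactly the quantities $\sum_j m_{ij}$ entering $RE$, so this common absolute row sum equals $n+RE(M)$. As $G_r(M)$ is positive semidefinite its spectral radius is its largest eigenvalue, and the bound $\rho(A)\le\max_i\sum_j|a_{ij}|$ yields
$$\lambda_{\max}(G_r(M))\le n+RE(M).$$

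The second, and decisive, ingredient is a lower bound $\lambda_{\max}(G_r(M))\ge 2n-2$ valid for \emph{every} $(1,-1)$-matrix of order $n=2t$ with $t$ odd. This is precisely where the hypothesis $n\equiv 2 \mod 4)$ enters: such an order forbids three mutually orthogonal $\pm1$ rows, since normalizing one of three such rows to $\mathbf 1$ and counting the four sign-patterns of the other two forces each pattern to occur $n/4$ times, hence $n\equiv 0 \mod 4)$. Consequently the Gram matrix $G_r(M)$ — which has constant diagonal $n$, even off-diagonal entries and is positive semidefinite — cannot be close to $nI$. The Ehlich--Wojtas analysis (\cite{Ehl64,Woj64}) of such matrices shows that the determinant-optimal shape is the block form $\left[\begin{array}{cc}L&0\\0&L\end{array}\right]$ with $L=(n-2)I_{n/2}+2J_{n/2}$, whose eigenvalues are $2n-2$ (once) and $n-2$; the point I would extract is that the top eigenvalue of any admissible $G_r(M)$ is at least $2n-2$.

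Combining the two ingredients gives $n+RE(M)\ge\lambda_{\max}(G_r(M))\ge 2n-2$, that is $RE(M)\ge n-2=2t-2$, as claimed; equality forces $G_r(M)=\mathrm{diag}(L,L)$, i.e.\ an embedded D-optimal design, matching the role these designs play in the rest of the paper. The routine part is the first paragraph, a direct computation from Proposition \ref{prop1}. The hard part will be making the spectral lower bound $\lambda_{\max}(G_r(M))\ge 2n-2$ fully rigorous: the clean moment inequalities (for $\mathrm{tr}\,G_r(M)=n^2$ and $\mathrm{tr}\,G_r(M)^2=n\sum_h R(h)^2$) are by themselves too weak, as they already permit the Hadamard configuration $G_r(M)=nI$, and a naive count of nonvanishing row sums is also too lossy because a few large row sums can substitute for many small ones. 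One genuinely needs the integrality and parity obstructions underlying the Ehlich--Wojtas bound to exclude eigenvalue configurations with $\lambda_{\max}<2n-2$.
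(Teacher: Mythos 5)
Your first paragraph is correct: by Proposition \ref{prop1}, $|[G_r(M)]_{ij}|=|R(g_ig_j^{-1})|$, each row of $G_r(M)$ has absolute sum $n+RE(M)$, and therefore $\lambda_{\max}(G_r(M))\le n+RE(M)$. The gap is in your ``decisive ingredient'': the claimed bound $\lambda_{\max}(MM^T)\ge 2n-2$ for \emph{every} $(1,-1)$-matrix of order $n\equiv 2\ (\mathrm{mod}\ 4)$ is not just unproven, it is false, so no amount of work can make it rigorous. For $n=6$ take the rows
$$r_1=(1,1,1,1,1,1),\quad r_2=(1,1,1,1,-,-),\quad r_3=(1,-,-,1,1,1),$$
$$r_4=(1,1,-,-,1,-),\quad r_5=(1,1,-,-,-,1),\quad r_6=(1,-,1,-,1,-).$$
One checks directly that $MM^T=\left[\begin{array}{cc}B&0\\0&B\end{array}\right]$ with $B=\left[\begin{array}{rrr}6&2&2\\2&6&-2\\2&-2&6\end{array}\right]$, whose eigenvalues are $8,8,2$. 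Hence $\lambda_{\max}(MM^T)=8<10=2n-2$ (and $\det M=128<160$, consistent with Ehlich--Wojtas). The misstep is in what you extract from Ehlich--Wojtas: their theorem is an \emph{upper} bound on determinants, identifying $\mathrm{diag}(L,L)$ as the Gram matrix of the maximizer; it does not force a lower bound on $\lambda_{\max}$ over all admissible Gram matrices. The genuine obstruction for $n\equiv 2\ (\mathrm{mod}\ 4)$ --- no three pairwise orthogonal rows --- constrains only the \emph{support} of $MM^T-nI$; the example above has exactly the extremal support (two diagonal $3\times 3$ blocks), but the signs of the off-diagonal entries conspire to keep the spectral radius small. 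Restricting the claim to cocyclic $M$ does not rescue the argument either: since you have already shown $\lambda_{\max}(G_r(M))\le n+RE(M)$, the restricted claim is at least as strong as the proposition itself, and no independent route to it is offered.

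For comparison, the proof behind the citation \cite{AAFG11} (the present paper does not reprove the result) is purely combinatorial and uses precisely the mod-4 count you wrote down, but aimed at row sums rather than eigenvalues. Since $n$ is even, every row sum is even, so $RE(M)\ge 2\,\#\{i\ge 2:\ \mbox{row $i$ has nonzero sum}\}$, and it suffices to show that at most $t$ rows have zero sum. By (\ref{aatc}), rows $i$ and $j$ of a cocyclic matrix are orthogonal iff the row indexed by $g_ig_j^{-1}$ has zero sum; and your own sign-pattern count shows that for $t$ odd two zero-sum rows can never be orthogonal to each other. Hence the set $Z=\{h\in G:\ \mbox{row $h$ has zero sum}\}$ satisfies $ZZ^{-1}\cap Z=\emptyset$. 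If $|Z|>t=n/2$, then $Z$ and $gZ$ meet for every $g\in G$, so $ZZ^{-1}=G\supseteq Z$, a contradiction; thus $|Z|\le t$, so at least $(2t-1)-t=t-1$ of the non-initial rows have nonzero (hence at least $2$ in absolute value) sum, giving $RE(M)\ge 2t-2$.
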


But we may go even further. Having the minimum possible value $2t-2$ is a necessary condition for a cocyclic matrix $M$ to meet the bound $(\ref{det2})$.

\begin{proposition}\cite{AAFG11} \label{propvic}
If a cocyclic matrix $M$ of order $n=2t$ meets the bound (\ref{det2}), then $RE(M)=2t-2$.
\end{proposition}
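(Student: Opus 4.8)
The plan is to read the row sums of $M$ directly off the first column of its row Gram matrix $G_r(M)=MM^T$, and then to exploit the Ehlich--Wojtas description of $G_r(M)$ that is forced by equality in (\ref{det2}).

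First I would record the elementary but crucial observation that, because $M$ is normalized, its first row consists entirely of $1$s. Hence the inner product of the $i$-th row with the first row is exactly the $i$-th row sum, i.e.
\[
[G_r(M)]_{i1} = \sum_{k=1}^n m_{ik}m_{1k} = \sum_{k=1}^n m_{ik}, \qquad i=1,\ldots,n,
\]
so that the entries of the first column of $MM^T$ are precisely the row sums whose absolute values define $RE(M)$. (This also follows from (\ref{aatc}) upon setting $j=1$ and using $\psi(g_i,1)=1$.) In particular $RE(M)=\sum_{i=2}^n \bigl|[G_r(M)]_{i1}\bigr|$.

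Next I would invoke the hypothesis: since $M$ attains the bound (\ref{det2}), the Ehlich--Wojtas characterization forces $MM^T$ to be (a simultaneous row-and-column permutation of) the block matrix $\mathrm{diag}(L,L)$ with $L=(2t-2)I_t+2J_t$. The point is then to analyze a single column of such a matrix. Every column of $\mathrm{diag}(L,L)$ carries the diagonal value $2t$ together with $t-1$ entries equal to $2$ (the remaining off-diagonal positions inside its own $L$-block) and $t$ entries equal to $0$ (the positions in the opposite block); a simultaneous permutation merely reshuffles these values while keeping $2t$ on the diagonal. Applying this to column $1$ gives $[G_r(M)]_{11}=2t$ and an off-diagonal part consisting of exactly $t-1$ twos and $t$ zeros, independently of the permutation.

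Combining the two steps yields
\[
RE(M) = \sum_{i=2}^n \bigl|[G_r(M)]_{i1}\bigr| = (t-1)\cdot 2 + t\cdot 0 = 2t-2 ,
\]
as claimed. The one delicate point is the permutation ambiguity in the Ehlich--Wojtas normal form: one must not try to permute $M$ itself, since that could move the distinguished all-ones first row on which the identity $[G_r(M)]_{i1}=\sum_k m_{ik}$ rests. The obstacle is resolved precisely by arguing about the \emph{distribution} of entries in column $1$ of $\mathrm{diag}(L,L)$ rather than about any particular normal form; everything else is a direct reading of that column, so no serious computation is required.
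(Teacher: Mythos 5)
The paper offers no proof of this proposition---it is quoted from \cite{AAFG11}---but your argument is correct and is essentially the intended one: since $M$ is normalized, the first column of $G_r(M)=MM^T$ lists the row sums, and the Ehlich--Wojtas structure of $MM^T$ forced by equality in (\ref{det2}) leaves exactly $t-1$ off-diagonal entries of absolute value $2$ and $t$ zeros in that column, whence $RE(M)=2(t-1)=2t-2$. One small correction: equality in (\ref{det2}) only forces $MM^T$ to be a \emph{signed} simultaneous permutation of $\mathrm{diag}(L,L)$ (negating a row of $M$ preserves the determinant in absolute value, which is the paper's convention, but flips the signs of the corresponding off-diagonal Gram entries), so those $t-1$ entries are $\pm 2$ rather than $+2$; since $RE$ sums absolute values, this does not affect your conclusion.
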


Unfortunately, although having minimum absolute row excess is a necessary and sufficient condition for meeting the bound (\ref{det0}), it is just a necessary (but not sufficient, in general, see \cite[Table 5]{AAFG11}
) condition for meeting the bound (\ref{det2}). But there are some empirical evidences that matrices having minimum absolute row excess correspond with matrices having large determinants, see Table 2.1., page 11.

From now on, we fix $G={\cal D}_{2m}$ as the dihedral group with presentation $\langle a,b\colon \,a^m=b^2=(ab)^2=1\rangle$, with ordering $\{1,a,\ldots,a^{m-1},b,ab,\ldots,a^{m-1}b\}$  and indexed as $\{1,\ldots,2m\}$.
  A basis for cocycles over ${\cal D}_{2m}$ consists in (see \cite{AAFG11,AAFR08}):

   \begin{itemize}
   \item Let $m$ be  an odd positive integer.
   $${\cal B}=\{ \partial _2, \ldots , \partial_{2m-1},\beta_2\}.$$
   \item Let $m$ be an even positive integer.
   $${\cal B}=\{ \partial _2, \ldots , \partial_{2m-2},\beta_1,\beta_2,\gamma\}.$$
   \end{itemize}

  Here $\partial_i$ denotes the coboundary associated with the $i^{th}$-element of the dihedral group ${\cal D}_{2m}$, that is $a^{i-1 \, (\mbox{\tiny mod} \, m)}b^{\lfloor \frac{i-1}{m}\rfloor}$. And $\beta_1,\beta_2$ and $\gamma$ are the representative cocycles in cohomology, i.e. the cocyclic matrices coming from inflation are $M_{\beta_1}= J_m\otimes \left[ \begin{array}{rr} 1 & 1 \\  1 & - \end{array}\right]$ and $M_{\beta_2}=\left[ \begin{array}{rr} 1 & 1 \\ 1 & - \end{array}\right]\otimes J_m$.
  We use $A\otimes B$ for denoting the usual Kronecker product of matrices, that is, the block matrix whose blocks are $a_{ij}B$.

  The transgression cocyclic matrix $M_\gamma$ is $M_\gamma=\left[\begin{array}{rr} A_m & A_m \\ B_m & B_m \end{array}\right]$ for the $m\times m$ matrices
  $A_m=\left(a_{ij}\right)$ and $B_m=\left(b_{ij}\right)$ where
  $$a_{ij}=\left\{\begin{array}{rr} -1 & i+j>m+1 \\ 1 & \mbox{otherwise} \end{array}\right. ,\qquad\mbox{and}\qquad b_{ij}=\left\{\begin{array}{rr} -1 & i<j \\ 1 & \mbox{otherwise} \end{array}\right. ;$$
It has been observed that cocyclic Hadamard matrices over the dihedral group mostly use $M_{\beta_2}\circ M_{\gamma}$ and do not use $M_{\beta_1}$ (see \cite{AAFR09,Fla97}). In the sequel, we consider only cocyclic Hadamard matrices of this form $M_{{\partial}_{i_1}}\circ \ldots\circ M_{{\partial}_{i_w}}\circ M_{{\beta_2}}\circ M_{\gamma}$.

In what follows, the goal  is to provide an algebraic formalism (in terms of cocycles) to describe two combinatorial operations on a matrix:
the first consisting in eliminating and the second in adding certain rows and columns.

\begin{remark}{\rm
 ${\cal D}_{2t}$ is trivially embedded as a subgroup of ${\cal D}_{4t}$, the dihedral group of $4t$ elements. Concretely, if ${\cal D}_{4t}=
 \langle a,b\colon \,a^{2t}=b^2=(ab)^2=1\rangle$ then ${\cal D}_{2t}\cong \langle a^2,b\rangle\subset  {\cal D}_{4t}$.}
\end{remark}

\begin{proposition}
Let $M_\psi$ be a cocyclic matrix over ${\cal D}_{4t}$, then the $2t$ by $2t$ matrix obtained by eliminating from $M_\psi$ the rows and columns indexed with an even number is a cocyclic matrix over ${\cal D}_{2t}$ and we denote it as $\tilde{M}_{{\psi}}$.
\end{proposition}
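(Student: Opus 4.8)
The plan is to show that the rows and columns that survive the deletion, namely the odd-indexed ones, are precisely those belonging to the subgroup $\langle a^2,b\rangle\cong {\cal D}_{2t}$ \emph{listed in its own standard ordering}, and then to invoke the (essentially trivial) fact that the restriction of a cocycle to a subgroup is again a cocycle. First I would pin down the odd-indexed elements. With the fixed ordering $\{1,a,\ldots,a^{2t-1},b,ab,\ldots,a^{2t-1}b\}$, the element in position $i$ is $a^{i-1}$ for $1\leq i\leq 2t$ and $a^{i-2t-1}b$ for $2t+1\leq i\leq 4t$. Since $t$ is odd, $2t$ is even, so an index $i$ is odd exactly when the corresponding exponent of $a$ is even. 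Hence the odd-indexed elements are $a^0,a^2,\ldots,a^{2t-2}$ together with $a^0b,a^2b,\ldots,a^{2t-2}b$, i.e. exactly the $2t$ elements of $\langle a^2,b\rangle$. Reading them in increasing index order yields $1,a^2,(a^2)^2,\ldots,(a^2)^{t-1},b,a^2b,\ldots,(a^2)^{t-1}b$, which is precisely the standard ordering of ${\cal D}_{2t}$ under the isomorphism sending its rotation generator to $a^2$ (the embedding recorded in the preceding remark). Establishing this ordering match is the step requiring the most care, and it is where the hypothesis that $t$ is odd is genuinely used.

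Next I would restrict the cocycle. Writing $H=\langle a^2,b\rangle$ and $\tilde{\psi}=\psi|_{H\times H}$, the point is that $H$, being a subgroup, is closed under multiplication, so for any $g_i,g_j,g_k\in H$ all the products occurring in the cocycle identity (\ref{condiciondecociclo}), namely $g_ig_j$, $g_jg_k$ and $g_ig_jg_k$, again lie in $H$. Consequently the identity (\ref{condiciondecociclo}), valid for every triple in ${\cal D}_{4t}$, holds in particular for every triple in $H$, so $\tilde{\psi}$ is a cocycle over ${\cal D}_{2t}\cong H$. Normalization is inherited as well, since $1\in H$.

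Finally I would assemble the pieces. By construction the $(k,l)$ entry of $\tilde{M}_{\psi}$ equals $\psi(h_k,h_l)$, where $h_k$ denotes the $k$-th odd-indexed element; by the ordering match $h_k$ is exactly the $k$-th element of ${\cal D}_{2t}$ in standard order, whence $\psi(h_k,h_l)=\tilde{\psi}(h_k,h_l)=[M_{\tilde{\psi}}]_{kl}$. Therefore $\tilde{M}_{\psi}=M_{\tilde{\psi}}$ is the cocyclic matrix over ${\cal D}_{2t}$ associated with $\tilde{\psi}$, as claimed. I expect the only delicate bookkeeping to be the index-versus-ordering correspondence of the first paragraph; the cocycle restriction itself is immediate from closure of the subgroup.
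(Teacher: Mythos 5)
Your proof is correct and takes essentially the same route as the paper's: identify the odd-indexed rows and columns with the subgroup $\langle a^2,b\rangle\cong{\cal D}_{2t}$ listed in its standard ordering, then observe that the restriction of $\psi$ to this subgroup still satisfies the cocycle identity (\ref{condiciondecociclo}), so $\tilde{M}_{\psi}=M_{\psi_{|_{\langle a^2,b\rangle}}}$. One minor remark: the ordering match does not actually use that $t$ is odd, since $2t$ is even for every integer $t$; that hypothesis is a standing assumption in the paper for other reasons and plays no genuine role in this proposition.
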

\begin{proof}
On the one hand, taking into account the ordering fixed  above, ${\cal D}_{4t}=\{1,a,\ldots,a^{2t-1},b,ab,\ldots,a^{2t-1}b\}$, the rows and columns in $M_\psi$ indexed with an odd number correspond with $\{1,a^2,\ldots,a^{2t-2},b,a^2b,\ldots,a^{2t-2}b\}=\langle a^2,b\rangle$.

On the other hand,  if $\psi_{|_{\langle a^2,b\rangle}}$ denotes the restriction of $\psi$ to the subgroup $\langle a^2,b\rangle$, then $\psi_{|_{\langle a^2,b\rangle}}$ satisfies (\ref{condiciondecociclo}) for $G=\langle a^2,b\rangle\cong {\cal D}_{2t}$ since
$\psi$ satisfies (\ref{condiciondecociclo}) for $G={\cal D}_{4t}$. In other words, $\psi_{|_{\langle a^2,b\rangle}}$  is a cocycle for $G=\langle a^2,b\rangle$ and $\tilde{M}_{{\psi}}=M_{\psi_{|_{\langle a^2,b\rangle}}}$.
\end{proof}

\begin{lemma}\label{lemmapherc}
Let $M_{\psi_1}$ and $M_{\psi_2}$ be  cocyclic matrices over ${\cal D}_{4t}$ then
$$\tilde{M}_{\psi_1\cdot\psi_2}=\tilde{M}_{\psi_1}\circ \tilde{M}_{\psi_2},$$
where $M_{\psi_1\cdot\psi_2}=M_{\psi_1}\circ {M}_{\psi_2}$.
\end{lemma}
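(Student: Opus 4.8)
The plan is to reduce everything to the identification, established in the preceding Proposition, that deleting the even-indexed rows and columns of a cocyclic matrix over $\mathcal{D}_{4t}$ amounts to restricting the underlying cocycle to the subgroup $S=\langle a^2,b\rangle\cong\mathcal{D}_{2t}$; that is, $\tilde{M}_\psi=M_{\psi_{|_S}}$. Once this dictionary is in place, the statement becomes the assertion that two pointwise operations — restriction of cocycles and multiplication of cocycles — commute, which I would check directly.

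Concretely, I would proceed in three short steps. First, I would record that restriction of cocycles is multiplicative: since both the product $\psi_1\cdot\psi_2$ and the restriction to $S$ are defined pointwise on group elements, we have $(\psi_1\cdot\psi_2)_{|_S}=(\psi_1)_{|_S}\cdot(\psi_2)_{|_S}$, with no computation beyond unwinding the definitions. Second, I would invoke the defining relation between the pointwise product of cocycles and the Hadamard product of their matrices, namely $M_{\phi_1\cdot\phi_2}=M_{\phi_1}\circ M_{\phi_2}$, now applied over the group $S$ to the cocycles $\phi_i=(\psi_i)_{|_S}$. Third, I would chain these together using the Proposition:
$$\tilde{M}_{\psi_1\cdot\psi_2}=M_{(\psi_1\cdot\psi_2)_{|_S}}=M_{(\psi_1)_{|_S}\cdot(\psi_2)_{|_S}}=M_{(\psi_1)_{|_S}}\circ M_{(\psi_2)_{|_S}}=\tilde{M}_{\psi_1}\circ \tilde{M}_{\psi_2},$$
which is exactly the claim.

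An equivalent and even more elementary route, which I would mention as a sanity check, bypasses the language of restriction altogether: the $\tilde{\,\cdot\,}$ operation simply extracts the principal submatrix on the fixed odd-index set $I=\{i_1<\dots<i_{2t}\}$, so $[\tilde{M}_\psi]_{k,l}=[M_\psi]_{i_k,i_l}$, and since the Hadamard product is entrywise one has $[\tilde{M}_{\psi_1\cdot\psi_2}]_{k,l}=[M_{\psi_1}]_{i_k,i_l}[M_{\psi_2}]_{i_k,i_l}=[\tilde{M}_{\psi_1}]_{k,l}[\tilde{M}_{\psi_2}]_{k,l}$. I do not expect any genuine obstacle here: the only point requiring care is that both matrices are restricted to the \emph{same} index set $I$, so that extracting a submatrix and forming the entrywise product are interchangeable. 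The content of the lemma is therefore the bookkeeping observation that selecting a common subset of rows and columns respects the Hadamard product, packaged at the level of cocycles.
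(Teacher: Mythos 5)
Your proposal is correct and matches the paper's argument: the paper's proof is the one-line observation that the identity is ``a straightforward consequence of the pointwise multiplication,'' which is exactly your elementary submatrix/Hadamard-product bookkeeping, and your restriction-of-cocycles formulation is just the same fact phrased via the preceding proposition's identification $\tilde{M}_\psi = M_{\psi_{|_{\langle a^2,b\rangle}}}$. No gaps; if anything, your write-up is more detailed than the paper's.
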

\begin{proof}
It is a straightforward consequence of the pointwise multiplication.
\end{proof}

From now on,  ${\cal B}=\{ \partial _2, \ldots , \partial_{4t-2},\beta_1,\beta_2,\gamma\}$ and
${\tilde{\cal B}}=\{ \tilde{\partial} _2, \ldots , \tilde{\partial}_{2t-1},\tilde{\beta}_2\}$ denote a  basis of cocycles for ${\cal D}_{4t}$ and ${\cal D}_{2t}$, respectively.

\begin{lemma}\label{lemmacobrc}
 The following identities hold:
$$\tilde{M}_{{\partial_i}}=\left\{\begin{array}{ll} J_{2t} & i \,\mbox{even}\\
   M_{\tilde{\partial}_{\frac{i+1}{2}}} & i\, \mbox{odd}\end{array}\right., \quad
   \tilde{M}_{{\beta_i}}=\left\{\begin{array}{cr} J_{2t} & i=1 \\
   M_{\tilde{\partial}_{\beta_{2}}} & i=2\end{array}\right.
$$
and
$$
\tilde{M}_{{\gamma}}=\prod_{i=1}^{ \frac{t-1}{2}}M_{ \tilde{\partial}_{2i}}\circ M_{\tilde{\partial}_{2t-2i+1}}.
$$

\end{lemma}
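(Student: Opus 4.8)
The plan is to read the tilde operation as restriction of a cocycle to the subgroup $H=\langle a^2,b\rangle$ formed by the odd-indexed elements $\{1,a^2,\dots,a^{2t-2},b,a^2b,\dots,a^{2t-2}b\}$, and then to combine this with the explicit shapes of the generating matrices. Writing $c=a^2$ and $d=b$, so that $H\cong{\cal D}_{2t}$ with ordering $\{1,c,\dots,c^{t-1},d,cd,\dots,c^{t-1}d\}$, the bookkeeping fact I would record first is that the $k$-th element of $H$ carries the global index $2k-1$ in ${\cal D}_{4t}$; equivalently, an odd global index $i$ corresponds to the local index $\frac{i+1}{2}$. For the coboundary identities I would argue from $\partial_i(g,h)=\delta_i(g)\delta_i(h)\delta_i(gh)$. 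When $i$ is even, $g_i$ is an odd power of $a$ (possibly times $b$) and so lies outside $H$; since $gh\in H$ for all $g,h\in H$, every factor $\delta_i$ equals $1$ on $H\times H$ and $\tilde M_{\partial_i}=J_{2t}$. When $i$ is odd, $g_i\in H$, the restriction of $\delta_i$ to $H$ is exactly the characteristic map of $g_i$ inside $H$, and because $g_i$ is the $\frac{i+1}{2}$-th element of $H$ the restricted coboundary is $\tilde\partial_{(i+1)/2}$, giving $\tilde M_{\partial_i}=M_{\tilde\partial_{(i+1)/2}}$.

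For the representative cocycles I would use the Kronecker forms directly. The matrix $M_{\beta_1}=J_{2t}\otimes\left[\begin{array}{cc}1&1\\1&-1\end{array}\right]$ carries a $-1$ in position $(i,j)$ exactly when both $i$ and $j$ are even, so its odd-indexed submatrix is all ones and $\tilde M_{\beta_1}=J_{2t}$. Writing $M_{\beta_2}=\left[\begin{array}{cc}1&1\\1&-1\end{array}\right]\otimes J_{2t}=\left[\begin{array}{cc}J_{2t}&J_{2t}\\J_{2t}&-J_{2t}\end{array}\right]$ and using that the $2t$ odd indices split into $t$ in each half, the restriction equals $\left[\begin{array}{cc}J_t&J_t\\J_t&-J_t\end{array}\right]=\left[\begin{array}{cc}1&1\\1&-1\end{array}\right]\otimes J_t=M_{\tilde\beta_2}$.

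The transgression is the substantial case, and I would split it in two. First a direct computation: restricting $M_\gamma=\left[\begin{array}{cc}A_{2t}&A_{2t}\\B_{2t}&B_{2t}\end{array}\right]$ to odd rows and columns keeps the odd local indices in every block, and since $a_{2p-1,2q-1}=-1\iff p+q>t+1$ while $b_{2p-1,2q-1}=-1\iff p<q$, one obtains $\tilde M_\gamma=\left[\begin{array}{cc}A_t&A_t\\B_t&B_t\end{array}\right]$, which is precisely the transgression matrix $M_{\tilde\gamma}$ of ${\cal D}_{2t}$. It then remains to prove the intrinsic identity $M_{\tilde\gamma}=\prod_{i=1}^{(t-1)/2}M_{\tilde\partial_{2i}}\circ M_{\tilde\partial_{2t-2i+1}}$ inside ${\cal D}_{2t}$.

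For that identity I would invoke the elementary fact that a Hadamard product of elementary coboundary matrices is again a single coboundary matrix: $\prod_{d\in S}M_{\partial_d}=M_{\partial\phi}$ with $\phi=\prod_{d\in S}\delta_d$ the set map equal to $-1$ exactly on $\{g_d:d\in S\}$. Here $S=\{2,4,\dots,t-1\}\cup\{t+2,t+4,\dots,2t-1\}$, whose elements in ${\cal D}_{2t}$ are $\{c,c^3,\dots,c^{t-2}\}\cup\{cd,c^3d,\dots,c^{t-2}d\}$, so $\phi(c^kd^\epsilon)=(-1)^k$. The remaining work, and the \emph{main obstacle}, is to verify $\partial\phi=\tilde\gamma$ entrywise. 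Taking a row $c^pd^\epsilon$ and a column $c^qd^\eta$ and using the dihedral rule $dc^q=c^{-q}d$, the product has $c$-exponent $p+q$ when $\epsilon=0$ and $p-q$ when $\epsilon=1$; reducing modulo $t$ and using crucially that $t$ is odd, so $(-1)^{\pm t}=-1$, the sign $\phi(c^pd^\epsilon)\phi(c^qd^\eta)\phi\big((c^pd^\epsilon)(c^qd^\eta)\big)$ collapses to $-1$ exactly when $p+q\geq t$ (for $\epsilon=0$) and when $p<q$ (for $\epsilon=1$). This matches the $A_t$/$B_t$ pattern of $M_{\tilde\gamma}$, and getting this parity collapse right, with the correct modular reductions and dihedral multiplication, is the delicate step of the argument.
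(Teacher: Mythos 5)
Your proof is correct, and it takes essentially the same approach as the paper: the paper's entire proof of this lemma is the single line ``The identities above follow by direct inspection,'' and your argument is exactly that inspection carried out in full (restriction of cocycles to the odd-indexed subgroup $\langle a^2,b\rangle$, the Kronecker shapes of $M_{\beta_1}$ and $M_{\beta_2}$, and the mod-$t$ parity computation identifying $\tilde{M}_{\gamma}$ with the coboundary $\partial\phi$ of the product set map). In particular, your handling of the delicate transgression case --- checking $\phi(g)\phi(h)\phi(gh)=-1$ exactly when $p+q\geq t$ (for $\epsilon=0$) or $p<q$ (for $\epsilon=1$), which crucially uses that $t$ is odd --- is sound and matches the $A_t/B_t$ pattern correctly.
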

\begin{proof}
The identities above follow by direct inspection.
\end{proof}

 Given $M_\psi$ a  ${\cal D}_{4t}$-matrix. The following result describes, in terms of cocycles, the unique ${\cal D}_{2t}$-matrix obtained  by eliminating from $M_\psi$ the rows and columns indexed with an even number.

\begin{theorem}\label{cord2d4} Given a
${\cal D}_{4t}$-matrix
$$M_\psi=   M_{{\partial}_{2}}^{\alpha_2}\circ \ldots \circ  M_{{\partial}_{4t-2}}^{\alpha_{4t-2}}\circ
M_{\beta_1}^{k_1}\circ M_{\beta_2}^{k_2}\circ M_{\gamma}^{k_3}
$$
where  $(\alpha_2,\ldots,\alpha_{4t-2},k_1,k_2,k_3)$ denotes a concrete $4t$-upla with entries 0 or 1.
Then
$$\begin{array}{l}
\tilde{M}_\psi=   \tilde{M}_{{\partial}_{2}}^{\alpha_2}\circ \ldots \circ  \tilde{M}_{{\partial}_{4t-2}}^{\alpha_{4t-2}}\circ
\tilde{M}_{{\beta_1}}^{k_1}\circ \tilde{M}_{{\beta}_2}^{k_2}\circ \tilde{M}_{\gamma}^{k_3}\\[3mm]
\,\,\,\,\,\quad = \D\prod_{j=1}^{2t-2}M_{\tilde{\partial}_{j+1}}^{\alpha_{2j+1}}\circ M_{\tilde{\beta}_2}^{k_2}\circ \left(\prod_{i=1}^{ \frac{t-1}{2}}M_{ \tilde{\partial}_{2i}}\circ M_{\tilde{\partial}_{2t-2i+1}}\right)^{k_3}.
\end{array}$$
\end{theorem}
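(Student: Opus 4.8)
The plan is to establish the two displayed equalities in turn. The first equality records that passing to $\tilde{M}$ distributes over the Hadamard product decomposition of $M_\psi$; the second is obtained by inserting the explicit restriction formulas of Lemma \ref{lemmacobrc} and collecting terms.

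For the first equality, the engine is Lemma \ref{lemmapherc}, which states that restriction is a homomorphism for the pointwise product: $\tilde{M}_{\psi_1\cdot\psi_2}=\tilde{M}_{\psi_1}\circ\tilde{M}_{\psi_2}$. Since $M_\psi$ is a Hadamard product of the basis matrices, I would apply this identity inductively across the factors; the induction is immediate, as restriction is literally the extraction of the odd-indexed principal submatrix, which commutes with entrywise multiplication of arbitrarily many matrices. The only bookkeeping point is the treatment of the exponents $\alpha_i,k_j\in\{0,1\}$: an exponent $0$ replaces a factor by the all-ones matrix $J_{4t}$, the identity for the Hadamard product, and since $\tilde{J}_{4t}=J_{2t}$ is again the Hadamard identity one has $\widetilde{M^{\alpha}}=(\tilde{M})^{\alpha}$ for each factor. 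Hence the exponents transfer verbatim, yielding the first line.

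For the second equality, I would substitute the restriction values supplied by Lemma \ref{lemmacobrc}. Three things happen: every even-indexed coboundary satisfies $\tilde{M}_{\partial_i}=J_{2t}$ and so disappears from the product; likewise $\tilde{M}_{\beta_1}=J_{2t}$ disappears, consistently with $\tilde{\cal B}$ having no $\beta_1$-type generator; and the remaining representatives pass through as $\tilde{M}_{\beta_2}=M_{\tilde{\beta}_2}$ and $\tilde{M}_\gamma=\prod_{i=1}^{(t-1)/2}M_{\tilde{\partial}_{2i}}\circ M_{\tilde{\partial}_{2t-2i+1}}$. The surviving coboundary factors are precisely the odd-indexed ones, for which Lemma \ref{lemmacobrc} gives $\tilde{M}_{\partial_i}=M_{\tilde{\partial}_{(i+1)/2}}$.

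The one step requiring care, and the main (if modest) obstacle, is confirming that the index reparametrization is exact. As $i$ ranges over the odd integers in $\{2,\ldots,4t-2\}$, i.e.\ $i\in\{3,5,\ldots,4t-3\}$, the substitution $i=2j+1$ makes $j$ run over $\{1,\ldots,2t-2\}$ and sends the new index $(i+1)/2$ to $j+1$, so $j+1$ ranges over $\{2,\ldots,2t-1\}$, matching exactly the coboundary indices available in $\tilde{\cal B}$. The exponent carried by $M_{\tilde{\partial}_{j+1}}$ is then precisely $\alpha_{2j+1}$, producing $\prod_{j=1}^{2t-2}M_{\tilde{\partial}_{j+1}}^{\alpha_{2j+1}}$. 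Combining this with the $\beta_2$ and $\gamma$ contributions gives the second displayed expression, and I expect no genuine difficulty beyond verifying these endpoints and that the shift $i\mapsto(i+1)/2$ agrees with Lemma \ref{lemmacobrc}.
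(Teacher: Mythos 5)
Your proposal is correct and follows exactly the paper's own route: the paper's proof is the one-line statement that the theorem ``follows from Lemmas \ref{lemmapherc} and \ref{lemmacobrc},'' and your argument is precisely that deduction spelled out, including the index check $i=2j+1\mapsto (i+1)/2=j+1$ that the paper leaves implicit.
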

\begin{proof}
It follows from Lemmas \ref{lemmapherc} and \ref{lemmacobrc}.
\end{proof}

Now, fixed $M_{\tilde{\psi}}$ a ${\cal D}_{2t}$-matrix. The whole set of ${\cal D}_{4t}$-matrices constructed by  adding certain rows and columns to $M_{\tilde{\psi}}$ is provided in the next theorem.

\begin{theorem}\label{theaddrc}
Given a ${\cal D}_{2t}$-matrix
$$M_{\tilde{\psi}}=M_{\tilde{\partial}_{i_1}}\circ \ldots\circ M_{\tilde{\partial}_{i_w}}\circ M_{\tilde{\beta_2}}.$$
Then any ${\cal D}_{4t}$-matrix of the form:
$$M_\phi=  \prod_{i=1}^{2t-1} M_{{\partial}_{2i}}^{\alpha_i}\circ M_{\beta_1}^{k_1}\circ M_{\gamma}^{k_2}\circ \left(\prod_{i=1}^{ \frac{t-1}{2}}M_{ {\partial}_{4i-1}}\circ M_{{\partial}_{4t-4i+1}}\right)^{k_2}\circ\,M_{{\partial}_{2i_1-1}}\circ \ldots\circ M_{{\partial}_{2i_w-1}}\circ M_{{\beta_2}},$$
with $\alpha_i,k_1$ and $k_2$ taking values from $\{0,1\}$, satisfies that $\tilde{M}_{\phi}=M_{\tilde{\psi}}$.
\end{theorem}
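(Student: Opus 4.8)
The plan is to reduce the whole statement to the two structural lemmas already in hand, so that it becomes a term-by-term bookkeeping computation. First I would use Lemma \ref{lemmapherc} --- equivalently, the fact that the elimination map $M_\psi \mapsto \tilde{M}_\psi$ is a homomorphism from cocyclic matrices over ${\cal D}_{4t}$ (under the Hadamard product) to cocyclic matrices over ${\cal D}_{2t}$ --- to distribute the tilde across the Hadamard product defining $M_\phi$. Since the free exponents $\alpha_i, k_1, k_2$ lie in $\{0,1\}$ and elimination commutes with taking such powers, this gives
$$\tilde{M}_\phi = \prod_{i=1}^{2t-1} \tilde{M}_{\partial_{2i}}^{\alpha_i} \circ \tilde{M}_{\beta_1}^{k_1} \circ \tilde{M}_\gamma^{k_2} \circ \left(\prod_{i=1}^{\frac{t-1}{2}} \tilde{M}_{\partial_{4i-1}} \circ \tilde{M}_{\partial_{4t-4i+1}}\right)^{k_2} \circ \tilde{M}_{\partial_{2i_1-1}} \circ \ldots \circ \tilde{M}_{\partial_{2i_w-1}} \circ \tilde{M}_{\beta_2},$$
and the task is now to evaluate each factor with Lemma \ref{lemmacobrc}.

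Next I would discard the trivial factors. Because $J_{2t}$ is the identity for the Hadamard product, every factor that Lemma \ref{lemmacobrc} sends to $J_{2t}$ drops out regardless of its exponent: all the even-indexed coboundaries $\tilde{M}_{\partial_{2i}}$ vanish (the index $2i$ is even) and so does $\tilde{M}_{\beta_1}^{k_1}$. The crux of the argument is then the $\gamma$-block. Since $4i-1$ and $4t-4i+1$ are odd, Lemma \ref{lemmacobrc} gives $\tilde{M}_{\partial_{4i-1}} = M_{\tilde{\partial}_{2i}}$ and $\tilde{M}_{\partial_{4t-4i+1}} = M_{\tilde{\partial}_{2t-2i+1}}$, so the added product maps to exactly $\prod_{i=1}^{\frac{t-1}{2}} M_{\tilde{\partial}_{2i}} \circ M_{\tilde{\partial}_{2t-2i+1}}$, which by Lemma \ref{lemmacobrc} is $\tilde{M}_\gamma$. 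Hence this block contributes $\tilde{M}_\gamma^{k_2} \circ \tilde{M}_\gamma^{k_2} = \tilde{M}_\gamma^{2k_2}$, and since $\tilde{M}_\gamma$ is a $(1,-1)$-matrix its entrywise square is $J_{2t}$, so the entire $\gamma$-block collapses to the identity. This mutual cancellation is precisely the reason both $M_\gamma^{k_2}$ and the extra coboundary product are built into $M_\phi$; checking that the latter really reproduces $\tilde{M}_\gamma$ (and not some neighbouring product) is the one step that demands care with the index arithmetic.

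Finally, only the tail survives. For the odd index $2i_j-1$, Lemma \ref{lemmacobrc} gives $\tilde{M}_{\partial_{2i_j-1}} = M_{\tilde{\partial}_{i_j}}$, while $\tilde{M}_{\beta_2} = M_{\tilde{\beta}_2}$, so
$$\tilde{M}_\phi = M_{\tilde{\partial}_{i_1}} \circ \ldots \circ M_{\tilde{\partial}_{i_w}} \circ M_{\tilde{\beta}_2} = M_{\tilde{\psi}},$$
as claimed. Since no step used the actual values of $\alpha_i, k_1, k_2$, the conclusion holds for the whole $\{0,1\}$-parametrized family simultaneously, which is exactly what makes the set of preimages as large as asserted.
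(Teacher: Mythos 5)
Your proof is correct and follows essentially the same route as the paper: the paper's proof simply invokes Theorem \ref{cord2d4} (which is itself the combination of Lemmas \ref{lemmapherc} and \ref{lemmacobrc}), and you have carried out exactly that computation, spelling out the term-by-term evaluation --- the even coboundaries and $\beta_1$ collapsing to $J_{2t}$, the $\gamma$-block cancelling against the added odd-coboundary product, and the tail reproducing $M_{\tilde{\psi}}$ --- which the paper leaves as ``easy to check.'' No gaps; your index arithmetic for the $\gamma$-block matches Lemma \ref{lemmacobrc} exactly.
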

\begin{proof}
Using Theorem \ref{cord2d4}, it is easy to check that $\tilde{M}_{\phi}=M_{\tilde{\psi}}$.
\end{proof}

Using our algebraic formalism we are able to  give a method for embedding a ${\cal D}_{2t}$-matrix in the rows and columns indexed with an odd number of a ${\cal D}_{4t}$-Hadamard matrix whenever it is possible.
 Although, theoretically this method provides a solution,  from the practical perspective it is only appropriate for numerical calculations in low orders, because in the worst case it needs to check $2^{2t}$ possibilities. Hence, properties providing some cut down in  complexity and some heuristic are generally needed.

\vspace{3mm}

\begin{algorithm} \label{alec2te4t} {\rm Embedding ${\cal D}_{2t}$-matrices in  ${\cal D}_{4t}$-Hadamard matrices.

\vspace{2mm}

\noindent{Input: a  ${\cal D}_{2t}$-matrix. $M_{\tilde{\psi}}=M_{\tilde{\partial}_{i_1}}\circ \ldots\circ M_{\tilde{\partial}_{i_w}}\circ M_{\tilde{\beta_2}}$.}
\newline
\noindent{Output: a ${\cal D}_{4t}$- Hadamard matrix $M_{\phi}$ which contains embedded $M_{\tilde{\psi}}$ (that is,
$\tilde{M}_\phi=M_{\tilde{\psi}}$),  if such  matrix exists. }

\vspace{3mm}
\begin{enumerate}
\item[] Step 1. Calculate $M_{\psi}=M_{{\partial}_{2i_1-1}}\circ \ldots\circ M_{{\partial}_{2i_w-1}}\circ M_{{\beta_2}}$.
\item[] Step 2. Calculate all possible combinations
$$M_\phi=  \prod_{i=1}^{2t-1} M_{{\partial}_{2i}}^{\alpha_i}\circ M_{\gamma}^{k}\circ \left(\prod_{i=1}^{ \frac{t-1}{2}}M_{ {\partial}_{4i-1}}\circ M_{{\partial}_{4t-4i+1}}\right)^k\,\circ\,M_{\psi}$$
where $k$ and $\alpha_{i}$ may take the values $0$ or $1$.
\item[] Step 3. If there exists a $(\alpha_1,\ldots,\alpha_{2t-1},k)$ such that $RE(M_\phi)=0$, then $M_\phi$ is Hadamard. Otherwise, such matrix does not exist for $M_{\tilde{\psi}}$.

%\item[] Step 3. For each $m\in \cal{I}$, $\displaystyle\,\pi(m)=\sum_{[X_i]\in\Omega_m} \sharp [X_i]$ where $[X_i]\in \Omega_m$ iff $P_{X_i}=m$.
\end{enumerate}
}
\end{algorithm}

\vspace{1.5mm}

\noindent{\sc Verification:}
By construction, $M_\phi$ is a cocyclic Hadamard matrix over $D_{4t}$ and $\tilde{M}_{\phi}=M_{\tilde{\psi}}$ (see Theorem \ref{theaddrc}).

\vspace{3mm}

In the sequel, we describe an algorithm looking for ${\cal D}_{2t}$-matrices with large determinant embedded in a Ha\-da\-mard matrix. Actually, the output $M_\psi$ is a Ha\-da\-mard matrix where the  ${\cal D}_{2t}$-matrix, $M_{\tilde{\psi}}$, is embedded in a such a way that $\tilde{M}_\psi=M_{\tilde{\psi}}$ . We give two strategies that are both based on exhaustive searches. The first one needs to calculate ${\cal D}_{2t}$-matrices with large determinant (by the approach given in \cite{AAFG11}) and use Algorithm \ref{alec2te4t} for detecting if  it is embedded in a Hadamard matrix. The second one needs to calculate  ${\cal D}_{4t}$-Hadamard matrices, $M_\psi$, (using \cite{AAFR09}) and check the determinant of  $\tilde{M_\psi}$.

\begin{algorithm} \label{algbusexh}
{\rm
Search for ${\cal D}_{4t}$-Hadamard matrices where a  ${\cal D}_{2t}$-matrix with large determinant exists embedded in it.

\vspace{3mm}

\noindent{Input:  an odd positive integer $t$.}
\newline
\noindent{Output: a ${\cal D}_{4t}$-Hadamard matrix $M_{\psi}$ where $\frac{det(\tilde{M}_{{\psi}})}{(4t-2)(2t-2)^{t-1}}\geq \kappa$,  if such  matrix exists, with $0.85\leq\kappa \leq 1$.}

\vspace{1.5mm}

\noindent{\em Approach 1}
%\vspace{1.5mm}

\noindent{$\Omega\leftarrow \emptyset$}

%\newline

\noindent{$\cal{S}\leftarrow$ The  list of ${\cal D}_{2t}$-matrices satisfying that  $\frac{det({M}_{\tilde{\psi}})}{(4t-2)(2t-2)^{t-1}}\geq \kappa$}

%\newline

\noindent{while $\cal{S}$ is not empty $\{$ }
  \hspace*{0.5cm}\begin{enumerate}
  \item[] 1. Choose a matrix $M_{\tilde{\psi}}$ in $\cal{S}.$

  2. $\,{\cal{S}}\leftarrow{\cal{S}}\setminus\{M_{\tilde{\psi}}\}$.

   3. Check (using Algorithm \ref{alec2te4t}) whether exists a ${\cal D}_{4t}$-Hadamard matrix $M_\psi$ such that  $\tilde{M}_\psi=M_{\tilde{\psi}}$.
                      If not, go to 1; otherwise $\Omega\leftarrow M_\psi$, $S=\emptyset$.

   4. End while.

           \end{enumerate}
                   $\}$
\hspace{1mm}
\noindent{$\Omega$ }

\vspace{1mm}

\noindent{\em Approach 2}
%\vspace{.5mm}

\noindent{$\Omega\leftarrow \emptyset$}

%\newline

\noindent{$\cal{S}\leftarrow$ The  list of ${\cal D}_{4t}$-Hadamard matrices}

%\newline

\noindent{while $\cal{S}$ is not empty $\{$ }
  \hspace*{0.5cm}\begin{enumerate}
  \item[] 1. Choose a matrix $M_{{\psi}}$ in $\cal{S}.$

  2. $\,{\cal{S}}\leftarrow{\cal{S}}\setminus\{M_{{\psi}}\}$.

   3. Check whether $\frac{det(\tilde{M}_\psi)}{{(4t-2)(2t-2)^{t-1}}}\geq \kappa$.
                      If not, go to 1; otherwise $\Omega\leftarrow M_\psi$, $S=\emptyset$.

   4. End while.

           \end{enumerate}
                   $\}$
\hspace{1mm}
\noindent{$\Omega$ }

}
\end{algorithm}

%\noindent{\sc Verification:}

\begin{center}\begin{table}\label{tabla1}$$\begin{array}{|c||c|c|c|c|c|} \hline t& \#({M}_\psi) & det(\tilde{M}_\psi)/2^{2t-1} & \#{M}_\psi({M}_{\tilde{\psi}}) &  RE & R\\
\hline 3& 72&\begin{array}{r}{\bf 5}\\4 \end{array} &
\begin{array}{l} 36(6)\\36(9)\end{array} & \begin{array}{c} 4\\ 4 \end{array} & \begin{array}{l}
1\\0.8\end{array}\\
\hline 5& 1400 & \begin{array}{r} {\bf 144}\\125\\
81 \end{array} & \begin{array}{l} 100(25)\\1200(100)\\
100(50) \end{array} &\begin{array}{c} 8\\ 8 \\ 8 \end{array} & \begin{array}{l} 1\\ 0.86\\
  0.56\end{array}
\\ \hline 7&11368& \begin{array}{r}{\bf 9477}\\ 8405 \\ 7569\\ 4096  \\
2197\\845\\ 841\\576\end{array}
& \begin{array}{l}
392(196)\\2352(294)\\2352(294)\\392(196)\\
1176(294)\\1764(294)\\ 1176(294)\\1764(147)
\end{array}&
\begin{array}{c}
12\\12\\12\\12\\
20\\20\\20\\20
\end{array}
&
\begin{array}{l}
 1\\0.887\\
 0.799\\0.432\\
 0.232\\0.0892\\
 0.0887\\0.061
\end{array}
\\ \hline \end{array}$$\caption{Determinant spectrum for ${\cal D}_{2t}$-matrices embedded in Hadamard ${\cal D}_{4t}$-matrices, $t=3,5,7$.}\end{table}\end{center}

\begin{center}\begin{table}\label{tabla2}$$\begin{array}{|c||c|c|c|c|c|} \hline t& \#({M}_\psi) & det(\tilde{M}_\psi)/2^{2t-1} & \#{M}_\psi({M}_{\tilde{\psi}}) &  RE & R\\
 \hline 9&130248&
\begin{array}{r}
1003520\\998001\\
950480\\ 912925\\
842724\\ 812500\\
426320\\
411892\\
372368\\
300713\\263169\\
240448\\
201977\\
198005\\186624\\
179776\\155236\\
126025\\
90593\\88445\\59049\\ 20800\\
4096\\0
\end{array}&
\begin{array}{l}
5832(972)\\7776(972)\\
13608(4374)\\
13608(1944)\\
7776(1944)\\
5832(972)\\
5832(2430)\\1944 (972)\\
5832 (486)\\3888 (972)\\7776 (972)\\
3888 (486)\\1944 (486)\\3888 (972)\\
7776 (972)\\1944(486)\\3888 (1944)\\
5832 (972)\\972 (486)\\1944 (972)\\5832 (162)\\
5832(486)\\
972 (243)\\5832 (486)
\end{array}
&\begin{array}{c}
16\\
16\\
16\\
16\\
16\\
16\\
24\\
24\\
24\\
24\\
24\\
24\\
24\\
24\\
24\\
24\\
24\\
24\\
32\\24\\
32\\24\\
32\\
32
\end{array}  & \begin{array}{l}
0.9007\\0.896\\0.853\\ 0.819 \\ 0.756\\0.729\\0.383\\0.370\\0.334\\ 0.27\\0.236\\0.216\\0.181\\0.178\\0.168\\0.161\\0.139\\
0.113\\0.081\\0.079 \\0.053\\0.019 \\0.004\\0
\end{array}
\\ \hline \end{array}$$\caption{Determinant spectrum for ${\cal D}_{18}$-matrices embedded in Hadamard ${\cal D}_{36}$-matrices.}\end{table}\end{center}

For $3\leq t\leq 9$ odd, we have performed an exhaustive search looking for  the possible values of the determinant of ${\cal D}_{2t}$-matrices embedded in ${\cal D}_{4t}$-Hadamard matrices using Algorithm \ref{algbusexh} with $0\leq \kappa\leq 1$, and have been displayed in Tables 2.1 and 2.2.
All the calculations have been worked out in {\sc Mathematica 4.0}, running on a {\em Pentium IV 2.400 Mhz DIMM DDR266 512
MB}.

In Tables 2.1 and 2.2,
 $\#({M}_\psi)$  denotes the number of ${\cal D}_{4t}$-Hadamard matrices (with representative cocycle  $M_{\beta_2}\circ M_\gamma$). The second column of the table shows the different values for the determinant of $\tilde{M}_\psi$, whereas the third   displays about the frequency of  ${\cal D}_{4t}$-Hadamard matrices with  the same value for $det(\tilde{M}_\psi)$ and   between  parenthesis is the number of different ${\cal D}_{2t}$-matrices embedded, the fourth column gives the  absolute row excess of $\tilde{M}_\psi$.  Finally, the last column informs about $R=\frac{det(\tilde{M}_\psi)}{(4 t - 2)(2 t - 2)^{t - 1}}$ (which is called {\em efficiency of the design} in \cite{SXKM03}).

 Some interesting properties can be observed in these tables. First, for $t=3,5,7$ and $9$ there is just one value of $R$ above $0.9$
 (when $R=1$ we got D-optimal designs) and some of them above $0.85$. Second, for every pair of Hadamard matrices ${M}_{\psi_1}$ and ${M}_{\psi_2}$ with $det(\tilde{M}_{\psi_2})=det(\tilde{M}_{\psi_2})$ satisfy that  $RE(\tilde{M}_{\psi_1})=RE(\tilde{M}_{\psi_2})$.
 %Third, there are some ${\cal D}_{2t}$-matrices which appearing as a submatrix of a ${\cal D}_{4t}$-Hadamard matrix but
 %$M_{\tilde{\psi}}\neq\tilde{M}_\phi,\,\,\,\forall \phi\in Z({\cal D}_{4t})$ (for instance, a  ${\cal D}_{18}$-matrix with maximum determinant).
 Third, the minimum $RE$  corresponds with the largest values for the determinant for  $\tilde{M}_\psi$. Taking into account this last property, a result about   characterizing the ${\cal D}_{2t}$-matrices with minimum RE has interest. This characterization will be given here in terms of {\em maximal $n$-paths}, for this terminology we refer to \cite{AAFR08}.

%\noindent{ \bf Algorithm 2.}

\begin{proposition}
Let $M_{\tilde{\psi}}$ be a ${\cal D}_{2t}$-matrix. The $RE(M_{\tilde{\psi}})=2t-2$ iff
\begin{enumerate}
\item $M_{\tilde{\psi}}$ decomposes as a combination of the form  $M_{\tilde{\partial}_{i_1}}\circ \ldots\circ M_{\tilde{\partial}_{i_w}}\circ M_{\tilde{\beta_2}}$.
\item  The number of maximal $s$-paths for $M_{\tilde{\psi}}$ is either $\frac{t-1}{2}$ or   $\frac{t+1}{2}$ for $s=2,..,t$.
\end{enumerate}
\end{proposition}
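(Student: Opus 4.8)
The plan is to express the absolute row excess directly in terms of the number of $-1$'s per row and then treat the rotation rows and the reflection rows separately. Write $p_s$ for the number of $-1$'s in the $s$-th row of $M_{\tilde{\psi}}$, so that the $s$-th row sum is $2t-2p_s$ and $RE(M_{\tilde{\psi}})=\sum_{s=2}^{2t}|2t-2p_s|$. Two elementary observations drive everything. First, by Remark \ref{notacob} each elementary coboundary contributes an even number of $-1$'s to every row; since the number of columns carrying an odd number of $-1$-contributions has the same parity as the total number of such contributions, every row of a pure product $\prod M_{\tilde{\partial}_i}$ has $p_s$ even. Second, in the fixed ordering $M_{\tilde{\beta}_2}=\left[\begin{array}{cc} J_t & J_t\\ J_t & -J_t\end{array}\right]$, so Hadamard-multiplying by $M_{\tilde{\beta}_2}$ leaves the rotation rows $s=2,\dots,t$ untouched and negates the second half of each reflection row $s=t+1,\dots,2t$.

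First I would treat the rotation rows. For $s=2,\dots,t$ the value $p_s$ is even whether or not $M_{\tilde{\beta}_2}$ is present, and since $t$ is odd we get $|2t-2p_s|=2|t-p_s|\geq 2$, with equality precisely when $p_s=t\pm1$. Summing over the $t-1$ rotation rows already yields $RE(M_{\tilde{\psi}})\geq 2(t-1)$, recovering Proposition \ref{prophadi} in this case, and shows that the rotation contribution attains its minimum iff $p_s\in\{t-1,t+1\}$ for every such $s$. Invoking the description of $-1$-patterns of coboundary products in terms of maximal $s$-paths from \cite{AAFR08} — under which the number of $-1$'s in the $s$-th rotation row is twice the number of maximal $s$-paths $\pi_s$ — the condition $p_s=t\pm1$ translates into $\pi_s\in\{\frac{t-1}{2},\frac{t+1}{2}\}$, which is exactly statement~2. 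Condition~1 does not interfere here, since $M_{\tilde{\beta}_2}$ fixes the rotation rows.

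The decisive step is the reflection rows, and this is where statement~1 enters. For a reflection row indexed by $g_s=a^{r}b$ one checks, using $g_s^{-1}=g_s$ and $bab^{-1}=a^{-1}$, that the two $-1$-positions produced by each coboundary $\tilde{\partial}_i$ are governed by the involution pairing the first-half column $a^{c}$ with the second-half column $a^{r-c}b$: the columns $a^{c}$ and $a^{r-c}b$ receive their $-1$-contributions from exactly the same pair of coboundaries, hence carry equal entries in $\prod M_{\tilde{\partial}_i}$. Consequently the first-half sum $L_s$ and the second-half sum $R_s$ of the coboundary part coincide. Therefore, if $M_{\tilde{\beta}_2}$ is present the full reflection-row sum is $L_s-R_s=0$, so every reflection row contributes $0$ to $RE$; if it is absent the sum is $L_s+R_s=2L_s$, and since $p_s$ is even while $t$ is odd it can never vanish. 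This simultaneously proves sufficiency — under statements~1 and~2 the rotation rows contribute $2(t-1)$ and the reflection rows contribute $0$, giving $RE=2t-2$ — and necessity of statement~1: if $RE=2t-2$ then every reflection row must have zero sum, forcing the presence of $M_{\tilde{\beta}_2}$, while forcing each rotation contribution to equal $2$ yields statement~2 as above.

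The main obstacle will be making the reflection-row pairing fully rigorous. The clean involution $a^{c}\leftrightarrow a^{r-c}b$ must be checked against the exceptional cases hidden in Remark \ref{notacob} — the action of the coboundary $\tilde{\partial}_s$ on its own row, the identity column, and above all the asymmetry of the chosen basis $\{\tilde{\partial}_2,\dots,\tilde{\partial}_{2t-1}\}$, which omits the coboundaries attached to $g_1=1$ and to $g_{2t}=a^{t-1}b$. One must verify that these omissions preserve the equality $L_s=R_s$, since they affect paired columns symmetrically, and that the identification $p_s=2\pi_s$ of \cite{AAFR08} is the correct normalisation of the path count for each $s=2,\dots,t$. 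Once these bookkeeping points are settled, the equivalence follows by combining the rotation-row and reflection-row analyses.
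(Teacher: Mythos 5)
Your proposal is correct and takes essentially the same route as the paper's own proof: both arguments split the analysis into rotation rows (where the count of maximal $s$-paths, giving an even number $p_s=2\pi_s$ of $-1$s, forces each row sum to be $\pm 2$ exactly when $\pi_s\in\{\frac{t-1}{2},\frac{t+1}{2}\}$, i.e.\ condition 2) and reflection rows (where the identity $(a^kb)^{-1}=a^kb$ makes the $-1$s of the coboundary product split evenly between the two half-rows, so those row sums vanish precisely when $M_{\tilde{\beta}_2}$ is present, i.e.\ condition 1). Your explicit column-pairing involution $a^c\leftrightarrow a^{r-c}b$ is just a sharper form of the paper's even-distribution claim, and the exceptional cases you flag (the row of $\tilde{\partial}_s$ itself, the identity column, the omitted basis coboundaries) do check out routinely, so they are bookkeeping rather than gaps.
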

\begin{proof}
 The proof of this result follows just from the study of the distribution of $-1$ by rows in the elementary coboundaries (see \cite{AAFR08}). Actually, considering a matrix $N=M_{\tilde{\partial}_{i_1}}\circ \ldots\circ M_{\tilde{\partial}_{i_w}}$,  Remark \ref{notacob} implies that there is  (necessarily) an even positive number $2f_s$ of $-1$s located at row $s$ of $N$, $2 \leq s \leq 2t$.
Hence,
$$RE(N)\geq 4t-2.$$

For $t+1\leq s \leq 2t$, taking into account  the presentation of $D_{2t}$, it may be readily checked that $(a^kb)^{-1}=a^kb$. In this circumstance, the  $-1$s entries located at row $s$
are distributed in such a way that precisely $f_s$ of them occur through columns 1 to $t$, whereas the remaining $f_s$ occur through columns $t+1$ to $2t$. Furthermore, focusing on row $s$,  any two coboundary matrices $M_{\tilde{\partial} _i}$ and $M_{\tilde{\partial} _j}$ either share their two $-1$s entries at row $s$ (just in a unique row),  or do not share any of them at all $s$.

Consequently, attending to the form of $M_{\tilde{\beta}_2}$, the summation of row $s$ of $M_{\tilde{\psi}}$ is $0$ iff  $M_{\tilde{\psi}}=M_{\tilde{\partial}_{i_1}}\circ \ldots\circ M_{\tilde{\partial}_{i_w}}\circ M_{\tilde{\beta_2}}$.

For $2\leq s\leq t$, attending to the form of $M_{\tilde{\beta}_2}$ and the fact that  for every row $s$ every maximal $s$-path determines two $-1$'s entries in $N$  (see \cite{AAFR08}). Then, the summation of row $s$ of $_{\tilde{\psi}}$ is 2 or $-2$ iff the number of maximal $s$-paths for $M$ is either $\frac{t-1}{2}$ or   $\frac{t+1}{2}$ for $s=2,..,t$.

From above two equivalences, we conclude with the desired result.
\end{proof}

\section{On the pivot structure of ${\cal D}_{20}$-Hadamard matrices}

In 1988 Day and Peterson \cite{DP88} proved that the growth factor of an $n\times n$ Sylvester Hadamard matrix is $n$. In \cite[Example 6.2.4]{Hor07} it is shown  that the Sylvester Hadamard matrix of order $2^n$ is cocylic over $\integers_2^n$. Taking into account the difficulty of proving Cryer's conjecture and the previous statements,  the following question arises in a natural way: Is the growth factor of a ${\cal D}_{4t}$-Hadamard matrix its order?  %We believe that a affirmative answer exists.

In this section, we will focus on  ${\cal D}_{20}$-Hadamard matrices and  will extract some information on their pivot patterns from the D-optimal designs (and other specific submatrices with concrete determinants) that exist embedded in these matrices.

It is well-known    that there is a unique D-optimal design up to equivalence in each order up to $n=10$ \cite{KO06}.

  In Table 2.1,   the $D$-optimal design of order 10 appears (see \ref{d10}) embedded in  100 different  ${\cal D}_{20}$-Hadamard matrices. Using Magma V2.11 we found just one inequivalent ${\cal D}_{20}$-Hadamard matrix among the 100 (in higher orders, see \cite{Arm10} for an inequivalence criterion for ${\cal D}_{4t}$-Hadamard matrices). Based on exhaustive searches for minors of order 10 embedded in Hadamard matrices of order 20, we also identified the $D$-optimal design of order 10  in  the other two inequivalent Hadamard matrices of order 20. So, we can state the following result:

\newpage

\begin{lemma}
The D-optimal design of order $10$ is embedded in all Hadamard matrices of order 20.
\end{lemma}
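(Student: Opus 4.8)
The plan is to convert a statement about \emph{all} order-$20$ Hadamard matrices into a finite verification on three representatives, one per equivalence class, and then to exhibit an embedded copy of the unique D-optimal design $D_{10}$ in each.

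The reduction rests on an invariance observation. If $H_2$ is $H$-equivalent to $H_1$, write $H_2=D_r\Pi_r H_1\Pi_c D_c$ with $\Pi_r,\Pi_c$ permutation matrices and $D_r,D_c$ diagonal $(\pm1)$-matrices. Selecting any ten rows and ten columns of $H_2$ produces a submatrix whose determinant equals $\pm1$ times that of a uniquely determined $10\times10$ submatrix of $H_1$, and the induced map on pairs of $10$-subsets is a bijection. Hence the multiset of absolute values of the $10\times10$ minors is an $H$-equivalence invariant. Since $D_{10}$ is, up to equivalence, the unique $(1,-1)$-matrix of order $10$ of maximal determinant \cite{KO06}, and that determinant equals the Ehlich--Wojtas value $(4\cdot5-2)(2\cdot5-2)^{4}=18\cdot8^{4}=73728$ of (\ref{det2}), the property ``$H$ contains a $10\times10$ submatrix equivalent to $D_{10}$'' coincides with ``$H$ has a $10\times10$ minor of absolute value $73728$'', and this is constant on each class. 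Thus it suffices to check one matrix per class.

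I would then supply the three witnesses, using the fact that there are exactly three inequivalent Hadamard matrices of order $20$. For the cocyclic class, Table 2.1 already does the work: its row with $t=5$, $R=1$ records a ${\cal D}_{20}$-Hadamard matrix $M_\psi$ whose even-index-deleted submatrix satisfies $\det(\tilde M_\psi)=144\cdot2^{9}=73728$, so $\tilde M_\psi$ is a D-optimal design of order $10$; the Magma computation already cited shows that the $100$ such matrices form a single equivalence class. For the two remaining classes I would run a direct search over the $10\times10$ submatrices of one representative of each, halting as soon as a minor of absolute value $73728$ appears and recording the witnessing row and column index sets.

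The only genuine difficulty is computational: a blind enumeration inspects ${20\choose10}^{2}\approx3.4\times10^{10}$ row/column selections per matrix. The hard part is to make each search terminate quickly. I would replace the quadratic enumeration by iterating over the ${20\choose10}\approx1.85\times10^{5}$ column-subsets $C$ and, for each, inspecting the partial Gram matrix $G^{C}_{ij}=\sum_{c\in C}H_{ic}H_{jc}$ to look for ten indices $R$ whose principal submatrix $G^{C}_{R,R}$ has the Ehlich--Wojtas block form, namely two orthogonal groups of five rows with pairwise restricted inner product $2$; any such $R$ forces $|\det H_{R,C}|=73728$, so the rigid Gram pattern serves simultaneously as a pruning test and as a certificate. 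Since a single embedding per class is all that is needed, early termination applies, and together with the invariance above, finding one extremal minor in each of the three representatives proves that $D_{10}$ is embedded in every Hadamard matrix of order $20$.
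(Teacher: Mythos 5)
Your proposal is correct and takes essentially the same route as the paper: reduce to the three equivalence classes of Hadamard matrices of order $20$, read off the cocyclic class from Table 2.1 (together with the Magma computation showing those $100$ matrices form one class), and verify the remaining two classes by a finite search for a $10\times 10$ minor of absolute value $144\cdot 2^9$. The paper leaves the $H$-equivalence invariance of the minor spectrum and the search implementation implicit, whereas you spell both out, but the underlying argument is identical.
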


\begin{corollary}
For a Hadamard matrix of order 20, it holds that $H(10)$ can take the value $144\cdot 2^9$.
\end{corollary}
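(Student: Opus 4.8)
The plan is to read this off directly from the Lemma above together with the value of the maximal determinant in order $10$. First I would record that
$144\cdot 2^9 = 18\cdot 8^4 = (2n-2)(n-2)^{\frac{n-2}{2}}$ for $n=10$, which is exactly the Ehlich--Wojtas bound (\ref{det2}); hence $144\cdot 2^9$ is the (absolute value of the) determinant of the D-optimal design $D_{10}$ of order $10$. This also matches the top entry of Table 2.1 for $t=5$, where $det(\tilde{M}_\psi)/2^{2t-1}=144$ is recorded together with efficiency $R=1$, confirming that the row attaining the bound is indeed the D-optimal design.

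Next I would invoke the preceding Lemma, which states that $D_{10}$ is embedded in every Hadamard matrix $H$ of order $20$. By the notation fixed in the Introduction, $D_{10}\in H$ means that some choice of ten rows and ten columns of $H$ meets in a $10\times 10$ submatrix equal to $D_{10}$; in particular this submatrix has determinant $144\cdot 2^9$. It then remains only to move this submatrix into principal position: permuting the rows and columns of $H$ so that the ten selected rows and the ten selected columns occupy the first ten indices produces an $H$-equivalent matrix $H'$, again Hadamard of order $20$, whose leading $10\times 10$ block is $D_{10}$. Consequently $H'(10)=144\cdot 2^9$, which is precisely the assertion that $H(10)$ can take the value $144\cdot 2^9$.

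The point to keep in mind---rather than a genuine obstacle---is the word \emph{can}: as emphasized in the Introduction, $H$-equivalence operations do not preserve the pivot/minor pattern, so $H(10)$ is not a single well-defined number attached to an equivalence class but depends on the chosen ordering of rows and columns. The content of the corollary is therefore existential, namely that within each equivalence class of order-$20$ Hadamard matrices one can realize the leading $10\times 10$ minor equal to $144\cdot 2^9$. The embedding Lemma delivers this uniformly, because it applies to all three inequivalent Hadamard matrices of order $20$; via the relation $p_j=A(j)/A(j-1)$ of Cryer's Lemma, this is also what makes the value relevant to the pivot analysis pursued in this section.
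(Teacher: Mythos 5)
Your proposal is correct and follows essentially the same route as the paper: the preceding Lemma gives $D_{10}$ embedded in every Hadamard matrix of order 20, the determinant of $D_{10}$ is the Ehlich--Wojtas value $144\cdot 2^9$, and row/column permutations (which preserve the Hadamard property) move the embedded copy into the leading position so that the principal $10\times 10$ minor realizes this value. The paper treats this as an immediate consequence of the Lemma, and your only additions---verifying $144\cdot 2^9=(2n-2)(n-2)^{(n-2)/2}$ for $n=10$ and flagging the existential meaning of ``can''---are consistent with, and make explicit, what the paper leaves implicit.
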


\begin{equation}\label{d10}
D_{10}=\left[\begin{array}{cccccccccc}
 - & 1 & 1 & 1 & 1 & -& 1 & 1 & 1 & 1\\
 1 & 1 & - & 1 & 1 & 1 & - & 1 & 1 & 1\\
 1 & - & 1 & 1 & 1 & 1 & 1 & 1 & - & 1\\
 - & - & - & 1 & - & 1 & 1 & 1 & 1 & -\\
 - & - & - & - & 1 & 1 & 1 & - & 1 & 1\\
 1 & - & - & 1 & 1 & - & 1 & - & 1 & -1\\
 - & - & 1 & 1 & 1 & 1 & - & - & 1 & -1\\
 1 & 1 & 1 & 1 & - & 1 & 1 & - & 1 & 1\\
 - & 1 & - & 1 & 1 & 1 & 1 & - & - & -\\
 1 & 1 & 1 & - & 1 & 1 & 1 & 1 & 1 & -
 \end{array}\right]
\end{equation}

\begin{proposition}
Given $H$ a Hadamard matrix of order 20, there exists an equivalent CP matrix $H'$ where   $D_{10}$ appears located at the top left position in $H'$.
\end{proposition}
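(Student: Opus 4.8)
The plan is to place a copy of $D_{10}$ in the leading corner, discharge the hardest pivoting inequality for free using D-optimality, and then observe that the remaining conditions split into two decoupled blockwise problems.

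\textbf{Setup.} By the preceding Lemma, $D_{10}$ is embedded in $H$; applying the corresponding row and column permutations brings this copy to the top left, giving an $H$-equivalent matrix
$$H''=\left[\begin{array}{cc} D_{10} & B \\ C & E \end{array}\right]$$
with $(1,-1)$-blocks $B,C,E$. Because the unique (up to equivalence) D-optimal design of order $10$ is preserved by permutations and sign changes, I retain the freedom to permute and negate rows and columns \emph{inside} each of the two index blocks $\{1,\dots,10\}$ and $\{11,\dots,20\}$ without destroying the leading $D_{10}$.

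\textbf{Step $10$ is automatic.} Since $\det(D_{10})=144\cdot 2^{9}$ is the Ehlich--Wojtas value (\ref{det2}) for $n=10$, it is the \emph{largest} determinant of any $(1,-1)$-matrix of order $10$. As every $10\times 10$ submatrix of $H''$ is such a matrix, $A(10)$ dominates the absolute value of every $10\times 10$ minor of $H''$, hence in particular those retaining the first nine rows and columns. Thus the CP inequality at $k=10$ holds for any arrangement keeping $D_{10}$ in the leading block, and the pivot formula $p_{j}=A(j)/A(j-1)$ pins $p_{10}=144\cdot 2^{9}/A(9)$, matching the Corollary.

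\textbf{Decoupling.} The key observation I would use is that the remaining conditions separate. For $k\le 10$ the CP inequality reads $A(k)\ge \max_{i,j\ge k}|A(k\!-\!1;i,j)|$, where $A(k\!-\!1;i,j)$ denotes the $k\times k$ minor on rows $\{1,\dots,k-1,i\}$ and columns $\{1,\dots,k-1,j\}$; permuting the trailing rows and columns only relabels the competitors with index $>10$, leaving this maximum unchanged, so steps $1,\dots,10$ depend only on the internal arrangement of the leading block. Conversely, the Schur complement $E-CD_{10}^{-1}B$ is invariant under any permutation or sign change inside the leading block, so steps $11,\dots,20$ --- which are exactly the CP conditions of this Schur complement --- depend only on the trailing arrangement. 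The trailing group can therefore be settled at once: by the general fact that a matrix is carried to CP form by row and column permutations, some reordering of the last ten indices makes $E-CD_{10}^{-1}B$ completely pivoted.

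\textbf{Main obstacle.} What remains is to choose an internal ordering and sign pattern of the leading block so that the nine inequalities for $1\le k\le 9$ hold. This is the crux, because for $k\le 9$ the competing $k\times k$ minors include \emph{cross} minors that use a row or column of index $>10$; these involve $B$ and $C$ and are not controlled by the internal structure of $D_{10}$ alone. I would attack it by a guided search --- building the leading ordering one pivot at a time, at each step selecting the next row and column of $D_{10}$ so as to keep the leading minor dominant over all (block-internal and cross) competitors. As there are only three inequivalence classes of order-$20$ Hadamard matrices and an explicit $D_{10}$-embedding is available in each, this finite search exhibits, in every class, a block-internal arrangement for which all twenty CP inequalities hold; together with the trailing reordering it produces the required $H$-equivalent CP matrix $H'$ with $D_{10}$ at the top left.
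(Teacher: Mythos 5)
Your setup, the $k=10$ step, and the decoupling of the trailing steps via the Schur complement are all sound, and they parallel parts of the paper's argument: the paper also disposes of $k=10$ by the Ehlich--Wojtas maximality of $\det(D_{10})$, and it handles steps $11,\dots,20$ simply by taking the CP extension of $\hat{H}$, which can only touch trailing indices once the first ten steps are settled. But the proposition is not proved, because your ``Main obstacle'' paragraph --- which is exactly the content of the statement --- is discharged by asserting that an unspecified ``guided search'' over internal arrangements of the leading block ``exhibits, in every class, a block-internal arrangement for which all twenty CP inequalities hold.'' No such search is carried out, and nothing in your argument guarantees that it can succeed: there is no a priori reason why some ordering of the rows and columns of $D_{10}$ should dominate all cross minors built from $B$ and $C$. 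You have reduced the proposition to an unproven finite claim, which is precisely the claim that needs proof.

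The paper closes this gap with no search over arrangements at all: it fixes the explicit matrix $D_{10}$ of (\ref{d10}), observes that it is already feasible (CP), and proves that its leading principal minors dominate \emph{every} competitor --- including the cross minors that worry you --- through bounds that are universal over all $\pm 1$ matrices and hence independent of $B$ and $C$. Concretely: for $k=1,\dots,6$ and $k=10$, $D_{10}(k)$ attains the maximum determinant of \emph{any} $(\pm 1)$-matrix of order $k$, so all competitors are dominated outright; for $k=7$, every competing minor contains the leading $6\times 6$ block, which is a $D_6$, and since $D_6\notin D_7$ such a minor cannot attain $576$, hence is at most $512=D_{10}(7)$; for $k=8$, an enumeration of all $2^{13}$ ways (after sign normalization) of bordering the leading $7\times 7$ block by one arbitrary $\pm 1$ row and column --- which covers every cross minor --- shows the maximum is $2560=D_{10}(8)$; for $k=9$, the obstruction $D_9\notin D_{10}$ caps the competitors at the second-largest order-$9$ determinant value $12288=D_{10}(9)$. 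This is the idea missing from your proposal: the cross minors can be controlled universally, via the spectrum of the determinant function and embedding obstructions, rather than class by class through a search whose success is merely hoped for.
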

\begin{proof}
Let $\hat{H}$ be an equivalent matrix to $H$ such that $D_{10}$ appears located at the top left position in $\hat{H}$. Notice that $D_{10}$ (see (\ref{d10}) is feasible.
Our aim is to prove that  $D_{10}(i)=\hat{H}'(i), \,\,i=1,\ldots,10$.
\begin{itemize}
\item If $i=1, 2, 3, 4, 5, 6 $ and $10$ then the identity  holds because $D_{10}(i)$ takes the maximum determinant of any $(-1,1)$-matrix of order $i$ (see \cite{OS05}).
\item It well-known that $D_6\notin D_7$ (\cite{SM10,Mit11}), hence the  maximum determinant of any $(-1,1)$-matrix, $A$,  where   $D_{6}$ appears located at the top left position, is lesser than or equal to 512 (the second greatest, see the spectrum of the determinant function \cite{OS05}).
    Due to $D_{10}(7)=512$ then $D_{10}(7)=H'(7)$.
\item  We extend the $7\times 7$ matrix $D_{10}(7\times 7)$ located at the top left position in $D_{10}$ to all possible $8\times 8$ matrices of the form
$$\left[\begin{array}{cccccccc}
 - & 1 & 1 & 1 & 1 & -& 1 &  1\\
 1 & 1 & - & 1 & 1 & 1 & - &  *\\
 1 & - & 1 & 1 & 1 & 1 & 1 & * \\
 - & - & - & 1 & - & 1 & 1 & *\\
 - & - & - & - & 1 & 1 & 1 & *\\
 1 & - & - & 1 & 1 & - & 1 & *\\
 - & - & 1 & 1 & 1 & 1 & - & *\\
  1 & * & * & * & * & * & * & *\\
  \end{array}\right]$$
where the elements $*$ can be $\pm 1$. Computing the determinant of these $2^{13}$ $8\times 8$ matrices, we have that it is always lesser than or equal to
$D_{10}(7)=2560$. Therefore, $D_{10}(8)=H'(8)$.
\item Taking into account that $D_9\notin D_{10}$, then the  maximum $9\times 9$ minor of $D_{10}$ is lesser than or equal to 12288 (the second greatest, see the spectrum of the determinant function \cite{OS05}).
    Due to $D_{10}(9)=12288$ then $D_{10}(9)=H'(9).$
    \end{itemize}
Now, taking $H'$ as the CP extension of $\hat{H}$, the proof follows.
\end{proof}
\begin{corollary}
The pivot pattern $(1,2,2,4,3,10/3,16/5,5,24/5,6)$ for the first ten pivots appears in the three classes for Hadamard matrices of orden 20.
\end{corollary}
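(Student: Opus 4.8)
The plan is to combine the preceding Proposition with Cryer's lemma relating pivots to leading principal minors, and then to reduce everything to a single explicit determinant computation on the matrix $D_{10}$ of (\ref{d10}). First I would invoke the fact established above that $D_{10}$ is embedded in every Hadamard matrix of order $20$, together with the preceding Proposition: for each of the three inequivalence classes I may choose a completely pivoted representative $H'$ in which $D_{10}$ occupies the top left $10\times 10$ block and for which $H'(i)=D_{10}(i)$ for $i=1,\ldots,10$. Since these identities hold for the \emph{same} matrix $D_{10}$ in all three classes, the first ten leading principal minors of $H'$ are the same numbers regardless of the class.

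Next, because $H'$ is completely pivoted, Cryer's lemma (the first lemma of the paper) applies and gives the $j$-th pivot as $p_j = H'(j)/H'(j-1) = D_{10}(j)/D_{10}(j-1)$ for $j=1,\ldots,10$, with $D_{10}(0)=1$. Thus the first ten pivots of $H'$ depend only on the leading principal minors of the fixed matrix $D_{10}$, and in particular coincide across the three classes.

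It then remains only to evaluate the ten leading principal minors of $D_{10}$. A direct computation from (\ref{d10}) yields the values $D_{10}(1),\ldots,D_{10}(10)=1,2,4,16,48,160,512,2560,12288,73728$; several of these—namely $512$, $2560$, $12288$, and $73728=144\cdot 2^9$—already appear in the proof of the preceding Proposition and in the preceding Corollary, which provides a useful consistency check. Forming the consecutive ratios $D_{10}(j)/D_{10}(j-1)$ then produces exactly $(1,2,2,4,3,10/3,16/5,5,24/5,6)$, as claimed.

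The only genuine subtlety is the hypothesis of Cryer's lemma: it delivers the pivot values only for a completely pivoted matrix, not merely for a matrix whose leading minors are maximal among those fixing the preceding rows and columns. The preceding Proposition is precisely what supplies the completely pivoted representative $H'$ together with the required minor identities $H'(i)=D_{10}(i)$, so once that input is in hand the remainder is a routine, finite determinant calculation and the three classes are handled uniformly.
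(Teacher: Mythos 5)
Your proposal is correct and follows exactly the paper's intended route: the paper leaves this corollary without an explicit proof, but it is meant to follow from the preceding Lemma and Proposition (CP representative with $D_{10}$ in the top left and $H'(i)=D_{10}(i)$) combined with Cryer's lemma, and the minor values $1,2,4,16,48,160,512,2560,12288,73728$ you compute are precisely those established in the Proposition's proof. Your ratios reproduce the stated pivot pattern, so nothing is missing.
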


In a recent search that we have performed, we have found  this result:
\begin{itemize}
\item There is a CP matrix $H$  equivalent to the following   ${\cal D}_{20}$-Hadamard matrix \newline $M_\psi=M_{\partial_2}\circ M_{\partial_4}\circ M_{\partial_8}\circ M_{\partial_{10}}\circ M_{\partial_{13}}\circ M_{\partial_{14}}\circ M_{\beta_2}\circ M_{\gamma}$ such that
$H(10)=125\cdot 2^9$.
\end{itemize}

 Although we didn't find any result in the literature asserting that if the existence of a submatrix with large determinant is proven for a  matrix $A$, then we can indeed assume that it always appears in the upper left corner for some CP matrix $A'$ equivalent to $A$. It seems to be true at least when this submatrix reaches the largest determinant (see \cite[p.1763]{Mit11}). Also, the above result agree this for other large determinant (the second largest in Table 2.1).

\section{Conclusions and further work}
In this paper we have described a method for embedding a ${\cal D}_{2t}$-matrix  in the rows and columns indexed with an odd number of a ${\cal D}_{4t}$-Hadamard matrix whenever it is possible.
If this ${\cal D}_{2t}$-matrix has a determinant attaining the largest possible value, we get a D-optimal design. This method relies on two combinatorial operations on a cocyclic matrix: eliminate and add certain rows and columns. The idea behind this approach has been to translate
these two combinatorial operations into  a pure algebraic framework (concretely, in terms of cocycles). Finally, our study has provided some information about the pivot values when Gaussian elimination with complete pivoting is performed on   $D_{20}$-Hadamard matrices.

Our next goals are:

\begin{itemize}

\item Study the relationship between the $RE$  and the values of the determinant for  ${\cal D}_{2t}$-matrices.
\item Design heuristic searches based on $RE$ for Algorithm \ref{algbusexh}.
\item Study if $\tilde{M}_\psi$ satisfies that $\frac{det(\tilde{M}_\psi)}{{(4t-2)(2t-2)^{t-1}}}\geq 0.85$ implies that a CP  matrix $M$ equivalent to $M_\psi$ exists such that $M(2t)=det(\tilde{M}_\psi)$.
 \item Study the pivot structure of ${\cal D}_{4t}$-Hadamard matrices.
 \item Design an ``efficient'' method to construct ${\cal D}_{4t}$-Hadamard matrices from ${\cal D}_{2t}$-matrices.

\end{itemize}

% There are various ways to include postscript figures in a
% LaTeX file. Consult the manual for details.
% Below is a particular example with an encapsulated postscript file
% Note that the epsfig package must be included in the preamble.

% \begin{figure}[ht]
% \begin{center}
% \epsfig{file=circle.eps,height=38mm,width=38mm,clip=}
% \caption{A circle!}
% \end{center}
% \label{fig}
% \end{figure}

\bigskip
{\bf Acknowledgment.} This work has been partially supported by the research projects FQM-016 and P07-FQM-02980 from JJAA and MTM2008-06578 from MIC\-INN (Spain) and FEDER (European Union). The authors would also like to thank Kristeen Cheng for her reading of this article.

%%%%%%%%%%%%%%%%%%%%%%%%%%%%%%%%%%%%%%%%%%%%%%%%%%%%%%%%%%%%%

\end{document}